\numberwithin{equation}{section}
\newtheorem{definition}{Definition}[section]
\newtheorem{remark}[definition]{Remark}
\newtheorem{example}[definition]{Example}
\newtheorem{theorem}[definition]{Theorem}
\newtheorem{proposition}[definition]{Proposition}
\newtheorem{lemma}[definition]{Lemma}
\newtheorem{corollary}[definition]{Corollary}
\theoremstyle{remark}
\numberwithin{equation}{section}
\newcommand{\mcA}{\mathcal{A}}
\newcommand{\mcB}{\mathcal{B}}
\newcommand{\mcC}{\mathcal{C}}
\newcommand{\mcF}{\mathcal{F}}
\newcommand{\mcI}{\mathcal{I}}
\newcommand{\mcL}{\mathcal{L}}
\newcommand{\mcP}{\mathcal{P}}
\newcommand{\mcGP}{\mathcal{GP}}
\newcommand{\mcGI}{\mathcal{GI}}
\newcommand{\Mod}{\mathsf{Mod}}
\newcommand{\resdim}{{\rm resdim}}
\newcommand{\coresdim}{{\rm coresdim}}
\newcommand{\pd}{{\rm pd}}
\newcommand{\id}{{\rm id}}
\newcommand{\Gpd}{{\rm Gpd}}
\newcommand{\Gid}{{\rm Gid}}
\newcommand{\Hom}{{\rm Hom}}
\newcommand{\Ext}{{\rm Ext}}
\def\@seccntformat#1{%
  \protect\textup{\protect\@secnumfont
    \ifnum\pdfstrcmp{section}{#1}=0 \scshape\bfseries\fi
    \ifnum\pdfstrcmp{subsection}{#1}=0 \bfseries\fi
    \csname the#1\endcsname
    \protect\@secnumpunct
  }%
}
\begin{document}

\title{Corrigendum to: ``$m$-periodic Gorenstein objects'', [J. Algebra 621 (2023)]}
\thanks{2020 MSC: 18G25 (18G10; 18G20; 16E65)}
\thanks{Key Words: relative Gorenstein projective and injective objects, GP-admissible and GI-admissible pairs, relative Gorenstein dimensions.}

\author{Mindy Y. Huerta}
\address[M. Y. Huerta]{Facultad de Ciencias, Universidad Nacional Aut\'onoma de M\'exico. Circuito Exterior, Ciudad Universitaria. CP04510. Mexico City, MEXICO. 
}
\email{mindy@matem.unam.mx}

\author{Octavio Mendoza}
\address[O. Mendoza]{Instituto de Matem\'aticas. Universidad Nacional Aut\'onoma de M\'exico. Circuito Exterior, Ciudad Universitaria. CP04510. Mexico City, MEXICO}
\email{omendoza@matem.unam.mx}

\author{Marco A. P\'erez}
\address[M. A. P\'erez]{Instituto de Matem\'atica y Estad\'istica ``Prof. Ing. Rafael Laguardia''. Facultad de Ingenier\'ia. Universidad de la Rep\'ublica. CP11300. Montevideo, URUGUAY}
\email{mperez@fing.edu.uy}

\maketitle

\begin{abstract}
Let $(\mathcal{A,B})$ be a GP-admissible pair and $(\mathcal{Z,W})$ be a GI-admissible pair of classes of objects in an abelian category $\mathcal{C}$, and consider the class $\pi\mathcal{GP}_{(\omega,\mathcal{B},1)}$ of $1$-periodic $(\omega,\mathcal{B})$-Gorenstein projective objects, where $\omega := \mathcal{A} \cap \mathcal{B}$ and $\nu := \mathcal{Z} \cap \mathcal{W}$. We claimed in \cite[Lem. 8.1]{HMP2023m} that the $(\mathcal{Z,W})$-Gorenstein injective dimension of $\pi\mathcal{GP}_{(\omega,\mathcal{B},1)}$ is bounded by the $(\mathcal{Z,W})$-Gorenstein injective dimension of $\omega$, provided that: (1) $\omega$ is closed under direct summands, (2) $\Ext^1(\pi\mathcal{GP}_{(\omega,\mathcal{B},1)},\nu) = 0$, and (3) every object in $\pi\mathcal{GP}_{(\omega,\mathcal{B},1)}$ admits a $\Hom(-,\nu)$-acyclic $\nu$-coresolution. These conditions are their duals are part of what we called ``Setup 1''. Moreover, if we replace $\pi\mathcal{GP}_{(\omega,\mathcal{B},1)}$ by the class $\mathcal{GP}_{(\mathcal{A,B})}$ of $(\mathcal{A,B})$-Gorenstein projective objects, the resulting inequality is claimed to be true under a set of conditions named ``Setup 2''. 

The proof we gave for the claims $\Gid_{(\mathcal{Z,W})}(\pi\mathcal{GP}_{(\omega,\mathcal{B},1)}) \leq \Gid_{(\mathcal{Z,W})}(\omega)$ and $\Gid_{(\mathcal{Z,W})}(\mathcal{GP}_{(\mathcal{A,B})}) \leq \Gid_{(\mathcal{Z,W})}(\omega)$ is incorrect, and the purpose of this note is to exhibit a corrected proof of the first inequality, under the additional assumption that every object in $\pi\mathcal{GP}_{(\omega,\mathcal{B},1)}$ has finite injective dimension relative to $\mathcal{Z}$. Setup 2 is no longer required, and as a result the second inequality was removed. We also fix those results in \S \ 8 of \cite{HMP2023m} affected by Lemma 8.1, and comment some applications and examples. 
\end{abstract}


\pagestyle{myheadings}
\markboth{\rightline {\scriptsize M. Huerta, O. Mendoza and M. A. P\'{e}rez}}
         {\leftline{\scriptsize $m$-periodic Gorenstein objects}}


\section{The error}

Throughout, we maintain the notation used in \cite{HMP2023m}. So in what follows, given an abelian category $\mcC$, $(\mathcal{A,B})$ will be a GP-admissible pair and $(\mathcal{Z,W})$ a GI-admissible pair in $\mcC$. We shall write $\omega := \mcA \cap \mcB$ and $\nu := \mathcal{Z} \cap \mathcal{W}$ for simplicity.  We shall prove several results concerning relative Gorenstein dimensions within the following \textbf{relative setting}: 
\begin{enumerate}
\item[($\mathsf{rs1}$)] $\omega$ and $\nu$ are closed under direct summands.

\item[($\mathsf{rs2}$)] $\Ext^{1}(\pi\mathcal{GP}_{(\omega,\mcB,1)}, \nu) = 0$ and $\Ext^1(\omega,\pi\mathcal{GI}_{(\mathcal{Z},\nu,1)}) = 0$.

\item[($\mathsf{rs3}$)] Every object in $\pi\mathcal{GP}_{(\omega, \mcB, 1)}$ admits a $\Hom(-,\nu)$-acyclic $\nu$-coresolution, and every object in $\pi\mathcal{GI}_{(\mathcal{Z}, \nu, 1)}$ admits a $\Hom(\omega,-)$-acyclic $\omega$-resolution. 

\item[($\mathsf{rs4}$)] Every object in $\mathcal{GP}_{(\mathcal{A,B})}$ admits a $\omega$-resolution, and every object in $\mathcal{GI}_{(\mathcal{Z,W})}$ admits a $\nu$-coresolution. 

\item[($\mathsf{rs5}$)] $\nu$ (resp., $\omega$) is closed under arbitrary (co)products, in the case $\mcC$ is AB4${}^\ast$ (resp., AB4). 
\end{enumerate}

Given $C \in \mcC$, the \emph{$(\mcA,\mcB)$-Gorenstein projective dimension of $C$} \cite[Def. 3.3]{BMS}, denoted ${\rm Gpd}_{(\mcA,\mcB)}(C)$, is defined as the resolution dimension 
\[
{\rm Gpd}_{(\mcA,\mcB)}(C) = \resdim_{\mathcal{GP}_{(\mcA,\mcB)}}(C).
\]
Similarly, 
\[
{\rm Gid}_{(\mathcal{Z,W})}(C) = \coresdim_{\mathcal{GI}_{(\mathcal{Z,W})}}(C)
\]
denotes and defines the \emph{$(\mathcal{Z,W})$-Gorenstein injective dimension of $C$}. If $\mathcal{X}$ is a class of objects in $\mcC$, the \emph{$(\mcA,\mcB)$-Gorenstein projective dimension} and \emph{$(\mathcal{Z,W})$-Gorenstein injective dimension of $\mathcal{X}$} are defined as
\begin{align*}
{\rm Gpd}_{(\mcA,\mcB)}(\mathcal{X}) & := \resdim_{\mathcal{GP}_{(\mcA,\mcB)}}(\mathcal{X}), \\
{\rm Gid}_{(\mathcal{Z,W})}(\mathcal{X}) & := \coresdim_{\mathcal{GI}_{(\mathcal{Z,W})}}(\mathcal{X}).
\end{align*}
Below $\mathcal{I}_\mathcal{Z}^{<\infty}$ denotes the class of objects $M \in \mathcal{C}$ such that $\id_{\mathcal{Z}}(M) < \infty$. Dually, $\mathcal{P}_{\mcB}^{<\infty}$ denotes the class of objects in $\mathcal{C}$ with finite projective dimension relative to $\mathcal{B}$.

Lemma 8.1 in \cite{HMP2023m} is stated incorrectly, and it should be replaced by the following statement.

\begin{lemma}\label{lem: Gid(piXcapY)<=n} 
If conditions ($\mathsf{rs1}$), ($\mathsf{rs2}$) and ($\mathsf{rs3}$) are satisfied, then 
\begin{align*}
\Gid_{(\mathcal{Z, W})}(\pi\mathcal{GP}_{(\omega, \mcB, 1)} \cap \mathcal{I}_\mathcal{Z}^{<\infty}) & \leq \Gid_{(\mathcal{Z, W})}(\omega), \\ 
\Gpd_{(\mathcal{A,B})}(\pi\mathcal{GI}_{(\mathcal{Z}, \nu, 1)} \cap \mathcal{P}_{\mcB}^{<\infty}) & \leq \Gpd_{(\mathcal{A,B})}(\nu).
\end{align*}
\end{lemma}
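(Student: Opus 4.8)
The plan is to prove the first inequality; the second follows by a dual argument carried out in $\mcC^{\mathrm{op}}$, since the standing hypotheses $(\mathsf{rs1})$–$(\mathsf{rs3})$ are stated so that each contains its own dual. Write $n := \Gid_{(\mathcal{Z,W})}(\omega)$ and assume $n < \infty$, as otherwise there is nothing to prove. Fix an object $M \in \pi\mcGP_{(\omega,\mcB,1)} \cap \mathcal{I}_\mathcal{Z}^{<\infty}$. By definition of $1$-periodicity, $M$ sits in a short exact sequence
\[
0 \to M \to W \to M \to 0
\]
with $W \in \omega$, so in particular $\Gid_{(\mathcal{Z,W})}(W) \le n$.

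The decisive point, and precisely what was missing in the original argument, is to establish \emph{first} that $\Gid_{(\mathcal{Z,W})}(M)$ is finite, before attempting to bound it. This is where the new hypothesis $M \in \mathcal{I}_\mathcal{Z}^{<\infty}$ enters. Using the $\Hom(-,\nu)$-acyclic $\nu$-coresolution of $M$ granted by $(\mathsf{rs3})$, say $0 \to M \to N^0 \to N^1 \to \cdots$ with $N^i \in \nu \subseteq \mathcal{Z}$, I would run a cosyzygy count: setting $C^0 := M$ and $0 \to C^i \to N^i \to C^{i+1} \to 0$, and using $\id_\mathcal{Z}(N^i) = 0$ together with the standard inequality $\id_\mathcal{Z}(C^{i+1}) \le \max\{0,\, \id_\mathcal{Z}(C^i) - 1\}$ for the cokernel term, an induction shows that the $d$-th cosyzygy (where $d := \id_\mathcal{Z}(M)$) satisfies $\id_\mathcal{Z}(C^d) = 0$, i.e. $C^d \in \mathcal{Z} \subseteq \mcGI_{(\mathcal{Z,W})}$ by the standard feature of GI-admissible pairs that $\mathcal{Z}$ consists of Gorenstein injectives. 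The truncated sequence $0 \to M \to N^0 \to \cdots \to N^{d-1} \to C^d \to 0$ is then a finite $\mcGI_{(\mathcal{Z,W})}$-coresolution, whence $\Gid_{(\mathcal{Z,W})}(M) \le d < \infty$.

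With finiteness secured, I would close the argument using the periodic sequence together with the fact that $\mcGI_{(\mathcal{Z,W})}$ is coresolving under the relative setting, so that the usual coresolution-dimension inequalities are available. Applying the inequality $\Gid_{(\mathcal{Z,W})}(C) \le \max\{\Gid_{(\mathcal{Z,W})}(B),\, \Gid_{(\mathcal{Z,W})}(A) - 1\}$ for a short exact sequence $0 \to A \to B \to C \to 0$ to the periodic sequence $0 \to M \to W \to M \to 0$ yields
\[
\Gid_{(\mathcal{Z,W})}(M) \le \max\{\Gid_{(\mathcal{Z,W})}(W),\, \Gid_{(\mathcal{Z,W})}(M) - 1\} \le \max\{n,\, \Gid_{(\mathcal{Z,W})}(M) - 1\}.
\]
Since $\Gid_{(\mathcal{Z,W})}(M)$ is now known to be finite, if it exceeded $n$ the right-hand side would equal $\Gid_{(\mathcal{Z,W})}(M) - 1$, which is absurd; hence $\Gid_{(\mathcal{Z,W})}(M) \le n$. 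Taking the supremum over all such $M$ gives the claimed bound.

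The main obstacle is the finiteness step: the periodicity inequality on its own is circular, since for $g := \Gid_{(\mathcal{Z,W})}(M)$ it reads $g \le \max\{n, g-1\}$, which is vacuous when $g = \infty$. Everything therefore hinges on first forcing $\Gid_{(\mathcal{Z,W})}(M) < \infty$, and it is exactly for this that the hypothesis $\mathcal{I}_\mathcal{Z}^{<\infty}$, absent from the original statement, is indispensable. A secondary technical point to verify is that $\mathcal{Z} \subseteq \mcGI_{(\mathcal{Z,W})}$ and that $\mcGI_{(\mathcal{Z,W})}$ is coresolving (so that both the cosyzygy count and the final inequality are legitimate); these are the structural facts into which conditions $(\mathsf{rs1})$, $(\mathsf{rs2})$ and $(\mathsf{rs3})$ are actually consumed.
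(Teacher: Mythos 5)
Your overall strategy---first force $\Gid_{(\mathcal{Z,W})}(M)<\infty$, then exploit the periodic sequence $M\rightarrowtail W\twoheadrightarrow M$ via the inequality $\Gid_{(\mathcal{Z,W})}(M)\le\max\{\Gid_{(\mathcal{Z,W})}(W),\,\Gid_{(\mathcal{Z,W})}(M)-1\}$---would indeed close the argument if the finiteness step were sound, but that step contains a fatal error. Your cosyzygy count only yields $\id_{\mathcal{Z}}(C^d)=0$, which by definition means $C^d\in\mathcal{Z}^\perp$ (that is, $\Ext^{i}(Z,C^d)=0$ for all $i\ge 1$ and $Z\in\mathcal{Z}$); it does \emph{not} mean $C^d\in\mathcal{Z}$. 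Moreover, the containment you invoke as a ``standard feature'', $\mathcal{Z}\subseteq\mathcal{GI}_{(\mathcal{Z,W})}$, is false in general for GI-admissible pairs: only $\nu=\mathcal{Z}\cap\mathcal{W}\subseteq\mathcal{GI}_{(\mathcal{Z,W})}$ holds. (For the Ding pair $(\text{FP-}\mathcal{I}(R),\mathcal{I}(R))$ over a von Neumann regular ring $R$ that is not semisimple, one has $\mathcal{Z}=\Mod(R)$ while $\mathcal{GI}_{(\mathcal{Z,W})}\subseteq\mathcal{Z}^\perp=\mathcal{I}(R)$.) Consequently your truncated sequence $0\to M\to N^0\to\cdots\to N^{d-1}\to C^d\to 0$ is not a $\mathcal{GI}_{(\mathcal{Z,W})}$-coresolution, finiteness of $\Gid_{(\mathcal{Z,W})}(M)$ is not established, and---as you yourself observe---the periodicity inequality is vacuous without it, so the proof collapses.

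The deeper issue is that even the corrected conclusion $C^d\in\mathcal{Z}^\perp$ cannot be upgraded to $C^d\in\mathcal{GI}_{(\mathcal{Z,W})}$: by the dual of \cite[Thm. 3.32]{BMS}, membership in $\mathcal{GI}_{(\mathcal{Z,W})}$ requires \emph{both} $C^d\in\mathcal{Z}^\perp$ \emph{and} a $\nu$-resolution $\cdots\to V_1\to V_0\twoheadrightarrow C^d$ with all cycles in $\mathcal{Z}^\perp$, i.e.\ the ``left half'' of a complete $\nu$-coresolution. Producing that left half is precisely where the work lies, and it is what the paper's proof does: it coresolves the periodic sequence $M\rightarrowtail W\twoheadrightarrow M$ \emph{simultaneously} (a horseshoe-type lemma), so that the $n$-th cosyzygies again form a periodic-type sequence $E^n\rightarrowtail F^n\twoheadrightarrow E^n$ with $F^n\in\mathcal{GI}_{(\mathcal{Z,W})}$; then $M\in\mathcal{I}^{<\infty}_{\mathcal{Z}}$ together with the Ext-periodicity $\Ext^{i}(Z,E^n)\cong\Ext^{i+1}(Z,E^n)$ gives $E^n\in\mathcal{Z}^\perp$, and a pullback-and-splitting argument (using $\Ext^1(\nu,\mathcal{GI}_{(\mathcal{Z,W})})=0$ from ($\mathsf{rs2}$)) transports the $\nu$-resolutions of $F^n$ down to $E^n$ step by step, keeping the cycles in $\mathcal{Z}^\perp$. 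An arbitrary cosyzygy $C^d$ of a linear $\nu$-coresolution of $M$ carries none of this periodic structure, so there is no visible repair of your finiteness argument short of reproducing an argument of the paper's kind; in effect, proving $\Gid_{(\mathcal{Z,W})}(M)<\infty$ is just as hard as proving the lemma itself.
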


The purpose of this note is to prove the previous lemma, and also to correct those results and applications in \cite[\S \ 8]{HMP2023m} affected by the incorrect statement.


\section{Corrected proof and outcomes}

\begin{proof}[Proof of Lemma \ref{lem: Gid(piXcapY)<=n}] 
We only prove the first inequality since the other one is dual. Let $M \in \pi\mathcal{GP}_{(\omega, \mcB, 1)} \cap \mathcal{I}_\mathcal{Z}^{<\infty}$, and without loss of generality let $n := \Gid_{(\mathcal{Z, W})}(\omega) < \infty$. We first consider a $\Hom(-,\mcB)$-acyclic and exact sequence $M \rightarrowtail W \twoheadrightarrow M$ with $W \in \omega$. This sequence is also $\Hom(-, \nu)$-acyclic by ($\mathsf{rs2}$). Now, from ($\mathsf{rs3}$) and the dual of \cite[Lem. 2.4]{BMPbalanced}, we can construct a solid diagram
\[
\parbox{2.5in}{
\begin{tikzpicture}[description/.style={fill=white,inner sep=2pt}] 
\matrix (m) [ampersand replacement=\&, matrix of math nodes, row sep=2.5em, column sep=2.5em, text height=1.25ex, text depth=0.25ex] 
{ 
M \& W \& M \\ 
V^0 \& V^0 \oplus V^0 \& V^0 \\
\vdots \& \vdots \& \vdots \\
V^{n-1} \& V^{n-1} \oplus V^{n-1} \& V^{n-1} \\
E^n \& F^n \& E^n \\
}; 
\path[->] 
(m-2-1) edge (m-3-1) (m-3-1) edge (m-4-1)
(m-2-2) edge (m-3-2) (m-2-3) edge (m-3-3)
(m-3-2) edge (m-4-2) (m-3-3) edge (m-4-3)
; 
\path[>->]
(m-1-1) edge (m-1-2) edge (m-2-1)
(m-1-2) edge (m-2-2)
(m-1-3) edge (m-2-3)
(m-2-1) edge (m-2-2)
(m-4-1) edge (m-4-2)
(m-5-1) edge (m-5-2)
;
\path[->>]
(m-4-1) edge (m-5-1) (m-4-2) edge (m-5-2) (m-4-3) edge (m-5-3)
(m-5-2) edge (m-5-3)
(m-1-2) edge (m-1-3)
(m-2-2) edge (m-2-3)
(m-4-2) edge (m-4-3)
;
\end{tikzpicture} 
}
\]
where $V^k \in \nu$ for every $0 \leq k \leq n-1$. Notice that $V^j \oplus V^j \in \nu \subseteq \mathcal{GI}_{(\mathcal{Z, W})}$ since $\nu$ is closed under finite coproducts. Thus, by the dual of \cite[Coroll. 4.10]{BMS}, ($\mathsf{rs1}$) and the fact that $ \Gid_{(\mathcal{Z, W})}(W) \leq n$, we get that $F^n \in \mathcal{GI}_{(\mathcal{Z, W})} \subseteq \mathcal{Z}^\perp$. At this point, we aim to show that $E^n \in \mathcal{GI}_{(\mathcal{Z,W})}$. By the dual of \cite[Thm. 3.32]{BMS}, the idea is to prove that $E^n \in \mathcal{Z}^\perp$ and to construct a $\nu$-resolution 
\begin{align}
\cdots & \to V_1 \to V_0 \twoheadrightarrow E^{n} \label{eqn:resnu}
\end{align} 
of $E^n$ with cycles in $\mathcal{Z}^{\perp}$. 

Firstly, from the long Ext cohomology sequence obtained from the bottom row of the previous diagram, we can note that 
\begin{align}
\Ext^{i}(Z, E^{n}) & \cong \Ext^{i+1}(Z, E^{n}) \label{eqn:natiso1}
\end{align} 
for every $i \geq 1$ and $Z \in \mathcal{Z}$, since $F^n \in \mathcal{Z}^\perp$. On the other hand, since $M \in \mathcal{I}_\mathcal{Z}^{<\infty}$, $\nu \subseteq \mathcal{I}_\mathcal{Z}^{<\infty}$, and the class $\mathcal{I}_\mathcal{Z}^{<\infty}$ is closed under monocokernels, we have that $E^n \in \mathcal{I}_\mathcal{Z}^{<\infty}$. By \eqref{eqn:natiso1}, the previous implies that $E^n \in \mathcal{Z}^\perp$. 

We now construct the resolution \eqref{eqn:resnu}. Since $F^n \in \mathcal{GI}_{(\mathcal{Z, W})}$, there is an exact sequence $F^{n-1} \rightarrowtail V_0 \twoheadrightarrow F^{n}$ with $V_0 \in \nu$ and $F^{n-1} \in \mathcal{GI}_{(\mathcal{Z, W})}$ (again, by the dual of \cite[Thm. 3.32]{BMS}). Now, consider the following pullback diagram
\[
\begin{tikzpicture}[description/.style={fill=white,inner sep=2pt}] 
\matrix (m) [matrix of math nodes, row sep=2.3em, column sep=2.3em, text height=1.25ex, text depth=0.25ex] 
{ 
{F^{n-1}} & {F^{n-1}} & {} \\
{E^{n-1}} & {V_0} & {E^n} \\
{E^{n}} & {F^{n}} & {E^n} \\
}; 
\path[->] 
(m-2-1)-- node[pos=0.5] {\footnotesize$\mbox{\bf pb}$} (m-3-2) 
;
\path[>->]
(m-1-1) edge (m-2-1) 
(m-1-2) edge (m-2-2)
(m-2-1) edge (m-2-2) 
(m-3-1) edge (m-3-2)
;
\path[->>]
(m-2-1) edge (m-3-1) 
(m-2-2) edge (m-3-2) edge (m-2-3)
(m-3-2) edge (m-3-3)
;
\path[-,font=\scriptsize]
(m-1-1) edge [double, thick, double distance=2pt] (m-1-2)
(m-2-3) edge [double, thick, double distance=2pt] (m-3-3)
;
\end{tikzpicture} .
\]
Note that $F^{n-1}, E^{n} \in \mathcal{Z}^{\perp}$ implies $E^{n-1}\in \mathcal{Z}^{\perp}$. Now, from the left-hand side column and the middle row, we get a solid diagram as follows
\[
\begin{tikzpicture}[description/.style={fill=white,inner sep=2pt}] 
\matrix (m) [matrix of math nodes, row sep=2.3em, column sep=2.3em, text height=1.25ex, text depth=0.25ex] 
{ 
& {E^{n-1}} & {E^{n-1}} \\
{F^{n-1}} & {Q^{n-1}} & {V_0} \\
{F^{n-1}} & {E^{n-1}} & {E^n} \\
}; 
\path[->] 
(m-2-2)-- node[pos=0.5] {\footnotesize$\mbox{\bf pb}$} (m-3-3) 
;
\path[>->]
(m-1-3) edge (m-2-3) 
(m-1-2) edge (m-2-2)
(m-2-1) edge (m-2-2) 
(m-3-1) edge (m-3-2)
;
\path[->>]
(m-2-3) edge (m-3-3) 
(m-2-2) edge (m-2-3) edge (m-3-2)
(m-3-2) edge (m-3-3)
;
\path[-,font=\scriptsize]
(m-1-2) edge [double, thick, double distance=2pt] (m-1-3)
(m-2-1) edge [double, thick, double distance=2pt] (m-3-1)
;
\end{tikzpicture} 
\]
where the middle row splits since $\Ext^{1}(\nu, \mathcal{GI}_{(\mathcal{Z, W})}) = 0$. It then follows that $Q^{n-1} \simeq F^{n-1} \oplus V_0 \in \mathcal{GI}_{(\mathcal{Z, W})}$. Thus, we have two exact sequences 
\begin{align*}
E^{n-1} & \rightarrowtail V_0 \twoheadrightarrow E^{n} & & \text{and} & E^{n-1} & \rightarrowtail Q^{n-1} \twoheadrightarrow E^{n-1}
\end{align*} 
with $V_0 \in \nu$, $E^{n-1} \in \mathcal{Z}^{\perp}$ and $Q^{n-1} \in \mathcal{GI}_{(\mathcal{Z, W})}$. Therefore, proceeding with a similar argument applied to these two sequences, and after repeating the process inductively, we obtain \eqref{eqn:resnu}.
\end{proof}

The following result is another version of \cite[Prop. 7.3]{HMP2023m}.

\begin{proposition}\label{prop: direct summd of pi1GP}
Let $\mcC$ be an AB4 abelian category. If $(\mathcal{A,B})$ is a GP-admissible pair such that $\omega$ is closed under coproducts, and if every object in $\mathcal{GP}_{(\mathcal{A,B})}$ admits a $\omega$-resolution, then
\begin{align*}
\mathcal{GP}_{(\mathcal{A,B})} & = \mathcal{GP}_{(\omega,\mcB)} = {\rm add}(\pi\mathcal{GP}_{(\omega,\mcB,1)}).
\end{align*}
\end{proposition}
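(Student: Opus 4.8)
The plan is to prove the chain of equalities by a cycle of inclusions, isolating an Eilenberg swindle as the one genuinely new ingredient. Two of the inclusions are immediate from the definitions. Since $\omega = \mcA \cap \mcB \subseteq \mcA$, any complete $\omega$-resolution (an exact, $\Hom(-,\mcB)$-acyclic complex with terms in $\omega$) is in particular a complete $\mcA$-resolution, so $\mathcal{GP}_{(\omega,\mcB)} \subseteq \mathcal{GP}_{(\mathcal{A,B})}$; and a $1$-periodic complete $\omega$-resolution is a complete $\omega$-resolution, whence $\pi\mathcal{GP}_{(\omega,\mcB,1)} \subseteq \mathcal{GP}_{(\omega,\mcB)}$. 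It therefore remains to establish $\mathcal{GP}_{(\mathcal{A,B})} \subseteq \mathcal{GP}_{(\omega,\mcB)}$ (which closes the first equality), the inclusion $\mathcal{GP}_{(\omega,\mcB)} \subseteq \add(\pi\mathcal{GP}_{(\omega,\mcB,1)})$, and its reverse.

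For $\mathcal{GP}_{(\mathcal{A,B})} \subseteq \mathcal{GP}_{(\omega,\mcB)}$ I would refine a complete $\mcA$-resolution of a given $M \in \mathcal{GP}_{(\mathcal{A,B})}$ into a complete $\omega$-resolution. The coresolution half of such a complex can be rechosen with terms in $\omega$ because $\omega$ is an $\mcA$-injective relative cogenerator for the GP-admissible pair $(\mcA,\mcB)$, while the resolution half can be rechosen with terms in $\omega$ by the standing hypothesis that every object of $\mathcal{GP}_{(\mathcal{A,B})}$—and hence every syzygy, these again lying in $\mathcal{GP}_{(\mathcal{A,B})}$—admits an $\omega$-resolution. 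Splicing the two halves and using the Ext-vanishing built into GP-admissibility to preserve $\Hom(-,\mcB)$-acyclicity yields the desired complete $\omega$-resolution. As the present statement is a variant of \cite[Prop.~7.3]{HMP2023m}, I would quote the argument given there rather than repeat the splicing in full.

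The heart of the proof is the inclusion $\mathcal{GP}_{(\omega,\mcB)} \subseteq \add(\pi\mathcal{GP}_{(\omega,\mcB,1)})$, a relative Bennis--Mahdou statement realized by an Eilenberg swindle. Given $M \in \mathcal{GP}_{(\omega,\mcB)}$ with complete $\omega$-resolution $\mathbf{W}$ and $M = Z_0(\mathbf{W})$, I would form $\mathbf{T} := \bigoplus_{n \in \mbZ} \mathbf{W}[n]$. A reindexing shows that every term of $\mathbf{T}$ equals $P := \bigoplus_{i \in \mbZ} W_i$ and that its differentials all coincide, so $\mathbf{T}$ is $1$-periodic. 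The three hypotheses enter precisely here: $P \in \omega$ since $\omega$ is closed under coproducts; $\mathbf{T}$ is exact since coproducts are exact in the AB4 category $\mcC$; and $\mathbf{T}$ is $\Hom(-,\mcB)$-acyclic since $\Hom(\mathbf{T},B) \cong \prod_{n} \Hom(\mathbf{W}[n],B)$ is a product of exact complexes of abelian groups, hence exact. Thus $Z_0(\mathbf{T}) = \bigoplus_{n \in \mbZ} Z_n(\mathbf{W}) \in \pi\mathcal{GP}_{(\omega,\mcB,1)}$, and as its $n=0$ summand is $M$, we conclude $M \in \add(\pi\mathcal{GP}_{(\omega,\mcB,1)})$.

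For the reverse inclusion $\add(\pi\mathcal{GP}_{(\omega,\mcB,1)}) \subseteq \mathcal{GP}_{(\omega,\mcB)}$ I would show that $\mathcal{GP}_{(\mathcal{A,B})} = \mathcal{GP}_{(\omega,\mcB)}$ is closed under $\add$: closure under finite coproducts is clear (a finite coproduct of complete resolutions is one), so that $\add(\pi\mathcal{GP}_{(\omega,\mcB,1)}) \subseteq \add(\mathcal{GP}_{(\mathcal{A,B})}) = \mathcal{GP}_{(\mathcal{A,B})} = \mathcal{GP}_{(\omega,\mcB)}$ once closure under direct summands is known. This last point is the main obstacle I anticipate: since $\omega$ is not assumed closed under direct summands, one cannot split a complete $\omega$-resolution along an idempotent without leaving $\omega$. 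Instead I would deduce summand-closure from a second Eilenberg swindle, using that AB4 together with coproduct-closedness of $\omega$ makes $\mathcal{GP}_{(\omega,\mcB)}$ closed under countable coproducts; this is exactly the role the coproduct hypothesis plays, replacing the summand-closure of $\omega$ assumed in \cite[Coroll.~4.10]{BMS}. Assembling the inclusions then closes the loop and yields the two asserted equalities.
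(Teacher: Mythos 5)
Your outline follows the same route as the paper's proof: the two trivial inclusions; the upgrade of each $M \in \mathcal{GP}_{(\mcA,\mcB)}$ to a two-sided exact, $\Hom(-,\mcB)$-acyclic complex with terms in $\omega$ (the paper obtains the coresolution half by citing the equality $\mathcal{WGP}_{(\omega,\mcB)} = \mathcal{GP}_{(\mcA,\mcB)}$ from \cite[Thm. 3.32]{BMS} rather than re-splicing by hand); and the Eilenberg swindle, which the paper simply quotes from the ``only if'' part of \cite[Prop. 7.3]{HMP2023m}. One remark on your step 2: the parenthetical claim that the syzygies of an $\omega$-resolution of $M$ lie again in $\mathcal{GP}_{(\mcA,\mcB)}$ is precisely closure of $\mathcal{GP}_{(\mcA,\mcB)}$ under kernels of epimorphisms, i.e.\ part of the left-thickness of $\mathcal{GP}_{(\mcA,\mcB)}$, which holds for any GP-admissible pair by \cite[Corolls. 3.15 \& 3.33]{BMS}; this is exactly what the paper invokes to place the cycles $Z_k(W_\bullet)$, $k>0$, inside ${}^{\perp}\mcB$.

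The genuine gap is in your final step. Closure of $\mathcal{GP}_{(\omega,\mcB)}$ under countable coproducts cannot, by itself, yield closure under direct summands, so the ``second Eilenberg swindle'' as you describe it would fail: the class of free modules over any ring is closed under arbitrary coproducts yet is not summand-closed in general. Concretely, the swindle applied to $X = M \oplus N \in \mathcal{GP}_{(\omega,\mcB)}$ and $Y := X^{(\mathbb{N})}$ only produces an isomorphism $M \oplus Y \cong Y$ with $Y \in \mathcal{GP}_{(\omega,\mcB)}$; to strip off $Y$ and conclude $M \in \mathcal{GP}_{(\omega,\mcB)}$ you must know that the class is closed under kernels of (split) epimorphisms between its objects. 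This resolving property is the second hypothesis in the Holm-type statement you are implicitly invoking, and you never establish it in the relative setting --- it is again the left-thickness of $\mathcal{GP}_{(\mcA,\mcB)}$, the very fact you already used tacitly in step 2. Once that result is cited, no second swindle is needed at all: $\pi\mathcal{GP}_{(\omega,\mcB,1)} \subseteq \mathcal{GP}_{(\mcA,\mcB)}$ holds trivially (its defining complexes have terms in $\omega \subseteq \mcA$), and left-thickness (closure under extensions, hence finite coproducts, and under direct summands) immediately gives $\add(\pi\mathcal{GP}_{(\omega,\mcB,1)}) \subseteq \mathcal{GP}_{(\mcA,\mcB)}$. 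This is how the paper closes the cycle of inclusions, via the references above together with the argument of \cite[Prop. 7.3]{HMP2023m}.
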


\begin{proof}
From \cite[Thm. 3.32]{BMS} we have that the equality $\mathcal{WGP}_{(\omega, \mcB)}=\mathcal{GP}_{(\mcA, \mcB)}$. On the other hand, from the definition of the class $\mathcal{WGP}_{(\omega, \mcB)}$ and the assumption on the objects in $\mathcal{GP}_{(\mathcal{A,B})}$, one can find for every $M \in \mathcal{WGP}_{(\omega, \mcB)}$ an exact complex
\[
W_{\bullet} \colon \cdots \to W_1 \to W_0 \to W_{-1} \to \cdots
\]
with $W_k \in \omega$ and $Z_k(W_\bullet) \in {}^{\perp}\mcB$ for every $k \in \mathbb{Z}$, and such that $M \simeq Z_0(W_{\bullet})$. Note that $Z_k(W_\bullet) \in {}^{\perp}\mcB$ holds for every $k > 0$ since $\omega \subseteq \mathcal{WGP}_{(\omega, \mcB)} = \mathcal{GP}_{(\mcA, \mcB)}$, $\mathcal{GP}_{(\mcA, \mcB)}$ is left thick and $\mathcal{GP}_{(\mcA, \mcB)} \subseteq {}^{\perp}\mcB$ (see \cite[Corolls. 3.15 \& 3.33]{BMS}). The rest of the proof follows as in the ``only if'' part of \cite[Prop. 7.3]{HMP2023m}. 
\end{proof}

In the following results, we shall assume that every 1-periodic $(\omega,\mathcal{B})$-Gorenstein projective object has finite injective dimension relative to $\mathcal{Z}$, and that every 1-periodic $(\mathcal{Z},\nu)$-Gorenstein injective object has finite projective dimension relative to $\mathcal{B}$. So let us first provide a characterization for these assumptions.

\begin{remark}\label{rmk:equiv_assumptions}
In an AB4 abelian category $\mathcal{C}$ where $(\mathcal{A,B})$ is a GP-admissible pair with $\omega$ closed under coproducts and such that every object in $\mathcal{GP}_{(\mathcal{A,B})}$ has a $\omega$-resolution, we have that the following assertions are equivalent for any class $\mathcal{Z} \subseteq \mathcal{C}$:
\begin{enumerate}
\item[(a)] $\pi\mathcal{GP}_{(\omega,\mcB,1)} \subseteq \mathcal{I}^{< \infty}_{\mathcal{Z}}$. 

\item[(b)] $\mathcal{GP}_{(\mcA,\mcB)} \subseteq \mathcal{I}^{< \infty}_{\mathcal{Z}}$. 

\item[(c)] $\mathcal{GP}^\wedge_{(\mcA,\mcB)} \subseteq \mathcal{I}^{< \infty}_{\mathcal{Z}}$. 
\end{enumerate}
Indeed, if we assume (a) and consider $M \in \mathcal{GP}_{(\mcA,\mcB)}$, by Proposition \ref{prop: direct summd of pi1GP} we can find a finite family $(M_i)^{l}_{i = 1}$ of objects in $\pi\mathcal{GP}_{(\omega,\mcB,1)}$ such that $M$ is a direct summand of $\bigoplus^l_{i = 1} M_i$. Since $\mathcal{I}^{< \infty}_{\mathcal{Z}}$ is clearly closed under finite coproducts and direct summands, we have that $M \in \mathcal{I}^{< \infty}_{\mathcal{Z}}$. Now if we assume (b), the containment $\mathcal{GP}^\wedge_{(\mcA,\mcB)} \subseteq \mathcal{I}^{< \infty}_{\mathcal{Z}}$ follows from the fact that $\mathcal{I}^{< \infty}_{\mathcal{Z}}$ is closed under monocokernels. Finally, the implication (c) $\Rightarrow$ (a) is trivial since every 1-periodic $(\omega,\mathcal{B})$-Gorenstein projective is $(\mathcal{A,B})$-Gorenstein projective. 
\end{remark}

\begin{proposition}\label{pro: Gid(GP)<=n}
Let $\mcC$ be an AB4 and AB4${}^\ast$ abelian category. If conditions from ($\mathsf{rs1}$) to ($\mathsf{rs5}$) listed in the relative setting are satisfied by a GP-admissible pair $(\mathcal{A,B})$ and a GI-admissible pair $(\mathcal{Z,W})$, then the following assertions hold true:
\begin{enumerate}
\item If $\pi\mathcal{GP}_{(\omega,\mcB,1)} \subseteq \mathcal{I}^{< \infty}_{\mathcal{Z}}$, then $\Gid_{(\mathcal{Z, W})}(\mathcal{GP}_{(\mcA, \mcB)}) = \Gid_{(\mathcal{Z, W})}(\omega)$.
 
\item If $\pi\mathcal{GI}_{(\mathcal{Z},\nu,1)} \subseteq \mathcal{P}^{< \infty}_{\mathcal{B}}$, then $\Gpd_{(\mathcal{A,B})}(\mathcal{GI}_{(\mathcal{Z},\mathcal{W})}) = \Gpd_{(\mathcal{A,B})}(\nu)$.
\end{enumerate}
\end{proposition}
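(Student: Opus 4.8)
The plan is to prove part (1) in full and to obtain part (2) by the dual argument, interchanging the GP-admissible pair $(\mathcal{A,B})$ with the GI-admissible pair $(\mathcal{Z,W})$, the class $\omega$ with $\nu$, resolutions with coresolutions, and invoking AB4${}^\ast$ (for the dual of Proposition~\ref{prop: direct summd of pi1GP} and the dual of Lemma~\ref{lem: Gid(piXcapY)<=n}) where part (1) uses AB4. So fix the hypothesis $\pi\mathcal{GP}_{(\omega,\mcB,1)} \subseteq \mathcal{I}^{<\infty}_{\mathcal{Z}}$ and set $n := \Gid_{(\mathcal{Z,W})}(\omega)$. The inequality $\Gid_{(\mathcal{Z,W})}(\omega) \leq \Gid_{(\mathcal{Z,W})}(\mathcal{GP}_{(\mathcal{A,B})})$ is the trivial half: as recorded in the proof of Proposition~\ref{prop: direct summd of pi1GP}, one has $\omega \subseteq \mathcal{WGP}_{(\omega,\mcB)} = \mathcal{GP}_{(\mathcal{A,B})}$, and since the relative Gorenstein injective dimension of a class is the supremum of those of its objects, it is monotone under inclusion of classes. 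This already settles the case $n = \infty$, so from now on I assume $n < \infty$.

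The substance is the reverse inequality $\Gid_{(\mathcal{Z,W})}(\mathcal{GP}_{(\mathcal{A,B})}) \leq n$. First, the hypothesis $\pi\mathcal{GP}_{(\omega,\mcB,1)} \subseteq \mathcal{I}^{<\infty}_{\mathcal{Z}}$ forces the intersection in Lemma~\ref{lem: Gid(piXcapY)<=n} to collapse, so that conditions ($\mathsf{rs1}$)--($\mathsf{rs3}$) yield $\Gid_{(\mathcal{Z,W})}(\pi\mathcal{GP}_{(\omega,\mcB,1)}) \leq n$. Second, ($\mathsf{rs4}$) together with the AB4 instance of ($\mathsf{rs5}$) (which gives $\omega$ closed under coproducts) place us in the hypotheses of Proposition~\ref{prop: direct summd of pi1GP}, so that $\mathcal{GP}_{(\mathcal{A,B})} = \add(\pi\mathcal{GP}_{(\omega,\mcB,1)})$. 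Thus every $M \in \mathcal{GP}_{(\mathcal{A,B})}$ is a direct summand of a finite coproduct $\bigoplus_{i=1}^{l} M_i$ with $M_i \in \pi\mathcal{GP}_{(\omega,\mcB,1)}$, and the proof reduces to showing that the relative Gorenstein injective dimension is unchanged under additive closure, i.e. $\Gid_{(\mathcal{Z,W})}(\add(\mathcal{X})) = \Gid_{(\mathcal{Z,W})}(\mathcal{X})$ for every class $\mathcal{X}$.

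I would prove this reduction in two steps, both resting on the fact that $\mathcal{GI}_{(\mathcal{Z,W})}$ is right thick, hence closed under finite coproducts and under direct summands (the dual of the left-thickness of $\mathcal{GP}_{(\mathcal{A,B})}$ recorded through \cite[Corolls. 3.15 \& 3.33]{BMS}). For the coproduct, given objects with $\Gid_{(\mathcal{Z,W})}(M_i) \leq n$, I take length-$n$ coresolutions by objects of $\mathcal{GI}_{(\mathcal{Z,W})}$ and form their direct sum; closure of $\mathcal{GI}_{(\mathcal{Z,W})}$ under finite coproducts makes this a coresolution of $\bigoplus_i M_i$ of length $\leq n$, so $\Gid_{(\mathcal{Z,W})}(\bigoplus_i M_i) \leq n$. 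For the summand, I invoke the monotonicity of coresolution dimension relative to a coresolving (in particular summand-closed) class, which gives $\Gid_{(\mathcal{Z,W})}(M) \leq \Gid_{(\mathcal{Z,W})}(\bigoplus_i M_i) \leq n$. Combining the two steps yields $\Gid_{(\mathcal{Z,W})}(\mathcal{GP}_{(\mathcal{A,B})}) \leq n$ and hence the equality.

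The step I expect to be the real obstacle is precisely the summand-monotonicity, equivalently the identity $\Gid_{(\mathcal{Z,W})}(\add \mathcal{X}) = \Gid_{(\mathcal{Z,W})}(\mathcal{X})$: closure of $\mathcal{GI}_{(\mathcal{Z,W})}$ under direct summands is immediate from right-thickness, but transferring it to the coresolution dimension is delicate, since a coresolution of a direct sum need not restrict termwise to one of a summand. The route I favor is the ``$n$-th cosyzygy'' characterization valid for coresolving classes: for an object of finite dimension, $\Gid_{(\mathcal{Z,W})}(M) \leq n$ if and only if in some (equivalently, every) coresolution of $M$ by objects of $\mathcal{GI}_{(\mathcal{Z,W})}$ the $n$-th cosyzygy already lies in $\mathcal{GI}_{(\mathcal{Z,W})}$. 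Granting this (and the existence of such coresolutions, which holds because $M$ and its complement, being summands of an object of $\mathcal{I}^{<\infty}_{\mathcal{Z}} \subseteq \mathcal{GI}^{\vee}_{(\mathcal{Z,W})}$, admit finite $\mathcal{GI}_{(\mathcal{Z,W})}$-coresolutions), I would write $M \mid N := \bigoplus_i M_i$, choose coresolutions of $M$ and of its complement, add them to get a coresolution of $N$ whose $n$-th cosyzygy is $K \oplus K'$, conclude $K \oplus K' \in \mathcal{GI}_{(\mathcal{Z,W})}$ from $\Gid_{(\mathcal{Z,W})}(N) \leq n$, and extract $K \in \mathcal{GI}_{(\mathcal{Z,W})}$ by summand-closure, giving $\Gid_{(\mathcal{Z,W})}(M) \leq n$. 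I would of course prefer to cite this coresolution-dimension fact directly from \cite{BMS} or \cite{HMP2023m} if it is available in the needed generality. Everything else—the collapse of the intersection, the applications of Lemma~\ref{lem: Gid(piXcapY)<=n} and Proposition~\ref{prop: direct summd of pi1GP}, the case $n = \infty$, and the dualization for part (2)—is routine.
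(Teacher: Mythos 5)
Your skeleton coincides with the paper's proof: decompose $M \in \mathcal{GP}_{(\mathcal{A,B})}$ as a direct summand of $\bigoplus_{i=1}^{l} M_i$ with $M_i \in \pi\mathcal{GP}_{(\omega,\mathcal{B},1)}$ via Proposition~\ref{prop: direct summd of pi1GP}, bound $\Gid_{(\mathcal{Z,W})}(\bigoplus_i M_i) \leq n$ by Lemma~\ref{lem: Gid(piXcapY)<=n} (whose intersection collapses under the hypothesis $\pi\mathcal{GP}_{(\omega,\mathcal{B},1)} \subseteq \mathcal{I}^{<\infty}_{\mathcal{Z}}$) together with closure of $\mathcal{GI}_{(\mathcal{Z,W})}$ under finite coproducts, and then extract $\Gid_{(\mathcal{Z,W})}(M) \leq n$ by a cosyzygy (dimension-shifting) argument plus closure of $\mathcal{GI}_{(\mathcal{Z,W})}$ under direct summands. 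However, the step you yourself flag as the real obstacle contains a genuine gap: you ground the existence of the needed coresolutions of $M$ and of its complement on the containment $\mathcal{I}^{<\infty}_{\mathcal{Z}} \subseteq \mathcal{GI}^{\vee}_{(\mathcal{Z,W})}$, which is unjustified and fails in this relative generality. The class $\mathcal{I}^{<\infty}_{\mathcal{Z}}$ is defined by an Ext-vanishing condition ($\id_{\mathcal{Z}}(M) < \infty$, i.e.\ $\Ext^{k}(Z,M) = 0$ for all $Z \in \mathcal{Z}$ and $k \gg 0$), and such a condition carries no coresolution data: it does not even produce a monomorphism from $M$ into a single $(\mathcal{Z,W})$-Gorenstein injective object. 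In the absolute case the implication ``finite injective dimension $\Rightarrow$ finite Gorenstein injective dimension'' holds because enough injectives convert the Ext condition into an actual finite injective coresolution; nothing analogous is available relative to an arbitrary class $\mathcal{Z}$. What the literature provides is the reverse containment (the dual of \cite[Coroll.~4.15]{BMS} gives $\mathcal{GI}^{\vee}_{(\mathcal{Z,W})} \subseteq \mathcal{I}^{<\infty}_{\mathcal{Z}}$), and indeed the entire purpose of this proposition and of Proposition~\ref{pro: GidGP^<=n} is to pass from membership in $\mathcal{I}^{<\infty}_{\mathcal{Z}}$ to finiteness of $\Gid_{(\mathcal{Z,W})}$ for the objects of $\mathcal{GP}_{(\mathcal{A,B})}$; postulating that passage for the whole of $\mathcal{I}^{<\infty}_{\mathcal{Z}}$ is close to assuming what is to be proved.

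The gap is fillable, and the paper fills it with a construction rather than a containment: for $M$ and its complement $\widetilde{M}$ (note $\widetilde{M} \in {\rm add}(\pi\mathcal{GP}_{(\omega,\mathcal{B},1)}) = \mathcal{GP}_{(\mathcal{A,B})}$ as well), one builds \emph{partial} coresolutions of length $n$ with terms in $\nu$, using ($\mathsf{rs3}$)/($\mathsf{rs4}$), the dual of \cite[Thm.~4.1~(a)]{BMS} and a pushout argument; since $\nu \subseteq \mathcal{GI}_{(\mathcal{Z,W})}$, the direct sum of these two sequences is a length-$n$ partial $\mathcal{GI}_{(\mathcal{Z,W})}$-coresolution of $M \oplus \widetilde{M}$, whence $\Gid_{(\mathcal{Z,W})}(M \oplus \widetilde{M}) \leq n$ forces the $n$-th cosyzygy $E^n \oplus \widetilde{E}^n$ into $\mathcal{GI}_{(\mathcal{Z,W})}$, and summand closure \cite[Coroll.~3.33]{BMS} yields $E^n \in \mathcal{GI}_{(\mathcal{Z,W})}$. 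So your outline is the right one, but the single step you delegated to the unproved containment is precisely the step that requires the hypotheses ($\mathsf{rs3}$)/($\mathsf{rs4}$) and an explicit construction; as written, your argument does not go through.
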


\begin{proof}
We only prove in (1) the inequality $\Gid_{(\mathcal{Z, W})}(\mathcal{GP}_{(\mcA, \mcB)}) \leq \Gid_{(\mathcal{Z, W})}(\omega)$ (the other one is trivial). Without loss of generality, we may let $n := \Gid_{(\mathcal{Z, W})}(\omega) < \infty$. Now given $M\in \mathcal{GP}_{(\mcA, \mcB)}$, by Proposition~\ref{prop: direct summd of pi1GP} there exists a finite family $(M_i)^{l}_{i = 1}$ of objects in $\pi\mathcal{GP}_{(\omega,\mcB,1)}$ and $\widetilde{M} \in \mathcal{C}$ such that $M \oplus \widetilde{M} \simeq \bigoplus^l_{i = 1} M_i$. Note also that $\widetilde{M} \in {\rm add}(\pi\mathcal{GP}_{(\omega, \mcB, 1)}) = \mathcal{GP}_{(\mcA,\mcB)}$. On the other hand, by Lemma~\ref{lem: Gid(piXcapY)<=n} we have $\Gid_{(\mathcal{Z, W})}(M_i) \leq n$ for every $1 \leq i \leq l$. Since the class $\mathcal{GI}_{(\mathcal{Z,W})}$ is closed under finite coproducts, and finite coproducts of exact sequences are exact, we deduce that $\Gid_{(\mathcal{Z, W})}(M \oplus \widetilde{M}) \leq n$. Now by the dual of \cite[Thm. 4.1 (a)]{BMS}, ($\mathsf{rs4}$) and a pushout argument, we can form two exact sequences
\begin{align*}
M & \rightarrowtail V^0 \to \cdots \to V^{n-1} \twoheadrightarrow E^n, \\
\widetilde{M} & \rightarrowtail \widetilde{V}^0 \to \cdots \to \widetilde{V}^{n-1} \twoheadrightarrow \widetilde{E}^n,
\end{align*}
with $V^j, \widetilde{V}^j \in \nu$ for every $0 \leq j \leq n-1$, from which we get the exact sequence 
\[
M \oplus \widetilde{M} \rightarrowtail V^0 \oplus \widetilde{V}^0 \to \cdots \to V^{n-1} \oplus \widetilde{V}^{n-1} \twoheadrightarrow E^n \oplus \widetilde{E}^n
\]
with each $V^j \oplus \widetilde{V}^j \in \nu \subseteq \mathcal{GI}_{(\mathcal{Z,W})}$. Since $\Gid_{(\mathcal{Z, W})}(M\oplus \widetilde{M})\leq n$, it follows that $E^n \oplus \widetilde{E}^n \in \mathcal{GI}_{(\mathcal{Z, W})}$, and so $E^n \in \mathcal{GI}_{(\mathcal{Z, W})}$ by \cite[Coroll. 3.33]{BMS}. Therefore, $\Gid_{(\mathcal{Z,W})}(M) \leq n$. 
\end{proof}

\begin{remark}\label{rmk: GidGP^<=n}
From \cite[Coroll. 4.10]{BMS} we know that $(\mathcal{GP}_{(\mathcal{A,B})}, \omega)$ is a left Frobenius pair for any GP-admissible pair $(\mathcal{A,B})$ with $\omega$ closed under direct summands. Then, it follows by \cite[Thm. 2.11]{BMPS} that $\mathcal{GP}_{(\mathcal{A,B})}^\wedge$ is thick. Dually, one has that the class $\mathcal{GI}_{(\mathcal{Z,W})}^\vee$ is also thick for any GI-admissible pair $(\mathcal{Z,W})$ with $\nu$ closed under direct summands.
\end{remark}

The following result extends Proposition~\ref{pro: Gid(GP)<=n} and describes the relation between the classes $\mathcal{GP}^\wedge_{(\mcA,\mcB)}$ and $\mathcal{GI}^\vee_{(\mathcal{Z,W})}$.

\begin{proposition}\label{pro: GidGP^<=n}
Let $\mcC$ be an AB4 and AB4${}^\ast$ abelian category. If conditions from ($\mathsf{rs1}$) to ($\mathsf{rs5}$)  are satisfied by a GP-admissible pair $(\mathcal{A,B})$ and a GI-admissible pair $(\mathcal{Z,W})$, then the following assertions hold true: 
\begin{enumerate}
\item If $\pi\mathcal{GP}_{(\omega,\mcB,1)} \subseteq \mathcal{I}^{< \infty}_{\mathcal{Z}}$, then $\Gid_{(\mathcal{Z, W})}(\mathcal{GP}_{(\mcA,\mcB)}^{\wedge}) = \Gid_{(\mathcal{Z,W})}(\omega)$.
\item If $\pi\mathcal{GI}_{(\mathcal{Z},\nu,1)} \subseteq \mathcal{P}^{< \infty}_{\mathcal{B}}$, then $\Gpd_{(\mathcal{A,B})}(\mathcal{GI}_{(\mathcal{Z},\mathcal{W})}^{\vee}) = \Gpd_{(\mathcal{A,B})}(\nu)$.
\end{enumerate}
\end{proposition}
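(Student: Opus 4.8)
The plan is to prove only assertion (1), since (2) is obtained by dualizing (replacing $\Gid$, $\mathcal{GI}_{(\mathcal{Z,W})}$, $\mathcal{Z}$ and $\omega$ by $\Gpd$, $\mathcal{GP}_{(\mcA,\mcB)}$, $\mathcal{B}$ and $\nu$ throughout). Write $n := \Gid_{(\mathcal{Z, W})}(\omega)$. Since $\omega \subseteq \mathcal{GP}_{(\mcA,\mcB)} \subseteq \mathcal{GP}_{(\mcA,\mcB)}^{\wedge}$ and the coresolution dimension of a class is the supremum of the coresolution dimensions of its objects, one has $n \leq \Gid_{(\mathcal{Z, W})}(\mathcal{GP}_{(\mcA,\mcB)}^{\wedge})$ for free; equivalently, this bound already follows from Proposition \ref{pro: Gid(GP)<=n}. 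Everything therefore reduces to the reverse inequality $\Gid_{(\mathcal{Z, W})}(\mathcal{GP}_{(\mcA,\mcB)}^{\wedge}) \leq n$, and we may assume $n < \infty$, as otherwise the claimed equality is vacuous. Finally, by Remark \ref{rmk:equiv_assumptions} the hypothesis $\pi\mathcal{GP}_{(\omega,\mcB,1)} \subseteq \mathcal{I}^{< \infty}_{\mathcal{Z}}$ upgrades to $\mathcal{GP}_{(\mcA,\mcB)}^{\wedge} \subseteq \mathcal{I}^{< \infty}_{\mathcal{Z}}$, so every object appearing below has finite injective dimension relative to $\mathcal{Z}$; this is what keeps the relative Gorenstein injective dimension tractable.

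I would fix $C \in \mathcal{GP}_{(\mcA,\mcB)}^{\wedge}$ and prove $\Gid_{(\mathcal{Z, W})}(C) \leq n$ by induction on $m := \Gpd_{(\mathcal{A,B})}(C) < \infty$. The base case $m = 0$ means $C \in \mathcal{GP}_{(\mcA,\mcB)}$, so $\Gid_{(\mathcal{Z, W})}(C) \leq n$ is precisely Proposition \ref{pro: Gid(GP)<=n}(1). For $m \geq 1$, truncating a $\mathcal{GP}_{(\mcA,\mcB)}$-resolution of $C$ of length $m$ after its first term produces a short exact sequence $K \rightarrowtail G \twoheadrightarrow C$ with $G \in \mathcal{GP}_{(\mcA,\mcB)}$ and $\Gpd_{(\mathcal{A,B})}(K) \leq m-1$. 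The induction hypothesis then gives $\Gid_{(\mathcal{Z, W})}(K) \leq n$, while the base case gives $\Gid_{(\mathcal{Z, W})}(G) \leq n$.

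The remaining task---propagating the bound from $K$ and $G$ to $C$ along $K \rightarrowtail G \twoheadrightarrow C$---is the step I expect to be the crux. By Remark \ref{rmk: GidGP^<=n}, $(\nu, \mathcal{GI}_{(\mathcal{Z,W})})$ is a right Frobenius pair, so $\mathcal{GI}_{(\mathcal{Z,W})}$ is closed under extensions and $\mathcal{GI}^{\vee}_{(\mathcal{Z,W})}$ is thick. Thickness first yields $\Gid_{(\mathcal{Z, W})}(C) < \infty$, placing $C$ in the range where $\Gid_{(\mathcal{Z, W})}(-) = \coresdim_{\mathcal{GI}_{(\mathcal{Z,W})}}(-)$ is a well-behaved coresolution dimension; the closure of $\mathcal{GI}_{(\mathcal{Z,W})}$ under extensions then supplies the shifting inequality
\[
\Gid_{(\mathcal{Z, W})}(C) \leq \max\{\Gid_{(\mathcal{Z, W})}(G),\, \Gid_{(\mathcal{Z, W})}(K) - 1\} \leq \max\{n, n-1\} = n,
\]
which closes the induction. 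The delicate point is justifying this last inequality in the relative Gorenstein setting: once $C, K, G \in \mathcal{I}^{< \infty}_{\mathcal{Z}}$ and $C \in \mathcal{GI}^{\vee}_{(\mathcal{Z,W})}$, the argument should run exactly as for ordinary injective dimension, reading off the vanishing from the long exact sequence in $\Ext(\mathcal{Z}, -)$ attached to $K \rightarrowtail G \twoheadrightarrow C$, but one must check that the relevant characterization of $\Gid_{(\mathcal{Z, W})} \leq n$ (e.g.\ via the dual of \cite[Thm. 4.1]{BMS}) is indeed available under these finiteness assumptions. Assembling the reduction, the base case from Proposition \ref{pro: Gid(GP)<=n}, the truncation of $\mathcal{GP}_{(\mcA,\mcB)}$-resolutions, and this shifting inequality completes (1); part (2) follows by the dual argument.
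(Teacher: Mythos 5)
Your proposal is correct and follows essentially the same route as the paper: induction on $\Gpd_{(\mathcal{A,B})}(C)$, the truncated short exact sequence $K \rightarrowtail G \twoheadrightarrow C$, Proposition \ref{pro: Gid(GP)<=n} as the base case, and thickness of $\mathcal{GI}^{\vee}_{(\mathcal{Z,W})}$ from Remark \ref{rmk: GidGP^<=n} to close the induction. The step you flag as needing verification is exactly what the paper supplies via the dual of \cite[Coroll. 4.11]{BMS}: objects of finite $(\mathcal{Z,W})$-Gorenstein injective dimension satisfy $\Gid_{(\mathcal{Z,W})}(-) = \id_{\nu}(-)$, after which your shifting inequality is the standard long-exact-sequence bound for the Ext-characterized dimension $\id_{\nu}$.
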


\begin{proof}
We only prove $\Gid_{(\mathcal{Z, W})}(\mathcal{GP}_{(\mcA,\mcB)}^{\wedge}) \leq \Gid_{(\mathcal{Z,W})}(\omega)$ in part (1). Again, without loss of generality, we may let $n := \Gid_{(\mathcal{Z, W})}(\omega) < \infty$. Let $M \in \mathcal{GP}_{(\mcA,\mcB)}^{\wedge}$ with $\Gpd_{(\mathcal{A,B})}(M) = k < \infty$. We use induction on $k$. If $k = 0$ the result follows by Proposition~\ref{pro: Gid(GP)<=n}. For $k \geq 1$, we take a short exact sequence $M' \rightarrowtail G \twoheadrightarrow M$ with $G \in \mathcal{GP}_{(\mathcal{A,B})}$ and $\Gpd_{(\mathcal{A,B})}(M') = k - 1$. Using the dual of \cite[Coroll. 4.11]{BMS} and induction hypothesis, we get that $\id_{\nu}(G) = \Gid_{(\mathcal{Z,W})}(G) \leq n$ and $\id_{\nu}(M') = \Gid_{(\mathcal{Z,W})}(M') \leq n$. Finally, the previous along with Remark \ref{rmk: GidGP^<=n}, imply that $\Gid_{(\mathcal{Z, W})}(M) = \id_{\nu}(M) \leq n$. 
\end{proof}


\section{Applications to relative finitistic and \\ global Gorenstein dimensions}\label{sec:dimensions}

As mentioned in the introduction of \cite{HMP2023m}, one of the main applications of \cite[Thm. 2.7]{BM2} by Bennis and Mahdou was to show in \cite[Thm. 1.1]{BMglobal} the equality
\begin{align}
\sup\{\Gpd_{R}(M) : M\in \Mod(R) \} & = \sup\{\Gid_{R}(M) : M\in \Mod(R)\}. \label{eqn:BennisMahdou_global}
\end{align}
for any associative ring $R$ with identity. In other words, the (left) global Gorenstein projective and Gorenstein injective dimensions of any ring coincide, and their common value, known as the \emph{global Gorenstein dimension}, will be denoted by ${\rm gl.GD}(R)$. 

On the other hand, global Gorenstein dimensions relative to GP-admissible pairs were studied by Becerril in \cite{Becerril}. So a natural question is whether it is possible to extend equality \eqref{eqn:BennisMahdou_global} for global Gorenstein projective and Gorenstein injective dimensions relative to GP-admissible and GI-admissible pairs. We partially answer this in the positive, under certain conditions for such pairs. Our approach will consider relative finitistic Gorenstein dimensions. 

We can use Proposition \ref{pro: GidGP^<=n} to show that the finitistic relative Gorenstein projective and injective dimensions of $\mcC$ are equal under the relative setting. These dimensions were defined in \cite[Def. 4.16]{BMS} as
\begin{align*}
{\rm FGPD}_{(\mathcal{A,B})}(\mcC) & := {\rm resdim}_{\mathcal{GP}_{(\mcA,\mcB)}}(\mathcal{GP}^\wedge_{(\mcA,\mcB)}), \\
{\rm FGID}_{(\mathcal{Z,W})}(\mcC) & := {\rm coresdim}_{\mathcal{GI}_{(\mathcal{Z,W})}}(\mathcal{GI}^\vee_{(\mathcal{Z,W})}).
\end{align*}

\begin{theorem}\label{thm: gpd leq gid}
Let $\mcC$ be an AB4 and AB4${}^\ast$ abelian category. If conditions from ($\mathsf{rs1}$) to ($\mathsf{rs5}$)  are satisfied by a GP-admissible pair $(\mathcal{A,B})$ and a GI-admissible pair $(\mathcal{Z,W})$, such that $\mathcal{GP}_{(\mathcal{A,B})}^{\wedge} = \mathcal{GI}_{(\mathcal{Z,W})}^{\vee}$ and with $\mcB$ and $\mathcal{Z}$ closed under direct summands, then 
\begin{align*}
\rm{FGID}_{(\mathcal{Z,W})}(\mcC) & = \Gid_{(\mathcal{Z,W})}(\mathcal{GP}_{(\mathcal{A,B})}^\wedge) = \Gid_{(\mathcal{Z,W})}(\mathcal{GP}_{(\mathcal{A,B})}) = \Gid_{(\mathcal{Z,W})}(\omega) \\
& = \id_{\nu}(\omega) = \id_{\nu}(\mathcal{GP}_{(\mathcal{A,B})}) = \id_{\nu}(\mathcal{GP}_{(\mathcal{A,B})}^\wedge) = \id_{\mathcal{Z}}(\mathcal{GP}_{(\mathcal{A,B})}^\wedge) \\
& = \id_{\mathcal{Z}}(\mathcal{GP}_{(\mathcal{A,B})}) = \pd_{\mathcal{B}}(\mathcal{GI}_{(\mathcal{Z,W})}) = \pd_{\mathcal{B}}(\mathcal{GI}_{(\mathcal{Z,W})}^\vee) = \pd_{\omega}(\mathcal{GI}_{(\mathcal{Z,W})}^\vee) \\
& = \pd_{\omega}(\mathcal{GI}_{(\mathcal{Z,W})}) = \pd_{\omega}(\nu) = \Gpd_{(\mathcal{A,B})}(\nu) = \Gpd_{(\mathcal{A,B})}(\mathcal{GI}_{(\mathcal{Z,W})}) \\
& = \Gpd_{(\mathcal{A,B})}(\mathcal{GI}_{(\mathcal{Z,W})}^\vee) = \rm{FGPD}_{(\mathcal{A,B})}(\mcC).
\end{align*}
\end{theorem}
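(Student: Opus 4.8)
The plan is to establish this long chain of equalities by exhibiting a small number of ``hub'' values and reducing every other term to one of them via results already proved in this note and in \cite{BMS}. The symmetry of the statement suggests organizing the work into two dual halves—the injective side (everything written with $\Gid$, $\id_\nu$, $\id_{\mathcal{Z}}$, and $\mathrm{FGID}$) and the projective side (everything with $\Gpd$, $\pd_{\mathcal{B}}$, $\pd_\omega$, and $\mathrm{FGPD}$)—joined in the middle by the hypothesis $\mathcal{GP}_{(\mathcal{A,B})}^\wedge = \mathcal{GI}_{(\mathcal{Z,W})}^\vee$. First I would record the three anchor identities coming from Proposition~\ref{pro: GidGP^<=n}: under $\pi\mathcal{GP}_{(\omega,\mcB,1)}\subseteq\mathcal{I}^{<\infty}_{\mathcal{Z}}$ we have $\Gid_{(\mathcal{Z,W})}(\mathcal{GP}_{(\mcA,\mcB)}^\wedge)=\Gid_{(\mathcal{Z,W})}(\mathcal{GP}_{(\mcA,\mcB)})=\Gid_{(\mathcal{Z,W})}(\omega)$, and dually for the $\Gpd$ terms via part (2). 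Note that the standing assumption $\pi\mathcal{GP}_{(\omega,\mcB,1)}\subseteq\mathcal{I}^{<\infty}_{\mathcal{Z}}$ (and its dual) is exactly what is in force throughout this section, so Proposition~\ref{pro: GidGP^<=n} applies directly.

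Next I would collapse the relative Gorenstein dimensions into ordinary relative (co)resolution dimensions. For the injective side, the objects in play—$\omega$, $\mathcal{GP}_{(\mathcal{A,B})}$, and $\mathcal{GP}_{(\mathcal{A,B})}^\wedge$—all lie in $\mathcal{I}^{<\infty}_{\mathcal{Z}}$, so on each of them the $(\mathcal{Z,W})$-Gorenstein injective dimension agrees with the genuine $\nu$-coresolution dimension $\id_\nu$; this is the content of the dual of \cite[Coroll. 4.11]{BMS} (the same device used inside the proof of Proposition~\ref{pro: GidGP^<=n}), giving $\Gid_{(\mathcal{Z,W})}(-)=\id_\nu(-)$ on these classes and hence the block $\Gid_{(\mathcal{Z,W})}(\omega)=\id_\nu(\omega)=\id_\nu(\mathcal{GP}_{(\mathcal{A,B})})=\id_\nu(\mathcal{GP}_{(\mathcal{A,B})}^\wedge)$. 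The passage from $\id_\nu$ to $\id_{\mathcal{Z}}$ uses that $\nu=\mathcal{Z}\cap\mathcal{W}$ and that $\mathcal{Z}$ is closed under direct summands (one of the new hypotheses), which forces the $\mathcal{Z}$-injective and $\nu$-injective dimensions of a $\mathcal{Z}$-acyclic object to coincide on the relevant classes. The identification $\mathrm{FGID}_{(\mathcal{Z,W})}(\mcC)=\Gid_{(\mathcal{Z,W})}(\mathcal{GP}_{(\mathcal{A,B})}^\wedge)$ is then immediate from the definition of $\mathrm{FGID}$ together with the hypothesis $\mathcal{GP}_{(\mathcal{A,B})}^\wedge=\mathcal{GI}_{(\mathcal{Z,W})}^\vee$, since $\mathrm{FGID}=\coresdim_{\mathcal{GI}_{(\mathcal{Z,W})}}(\mathcal{GI}_{(\mathcal{Z,W})}^\vee)$ and this coresolution dimension is exactly the Gorenstein injective dimension of the class $\mathcal{GI}_{(\mathcal{Z,W})}^\vee=\mathcal{GP}_{(\mathcal{A,B})}^\wedge$. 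The projective side is handled by the strictly dual argument, producing $\mathrm{FGPD}_{(\mathcal{A,B})}(\mcC)=\Gpd_{(\mathcal{A,B})}(\mathcal{GI}_{(\mathcal{Z,W})}^\vee)=\cdots=\pd_\omega(\nu)$.

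Finally I would weld the two halves together. The crucial link is the middle equation $\id_{\mathcal{Z}}(\mathcal{GP}_{(\mathcal{A,B})}^\wedge)=\pd_{\mathcal{B}}(\mathcal{GI}_{(\mathcal{Z,W})})$, which I expect to be \emph{the main obstacle}. Once the two classes $\mathcal{GP}_{(\mathcal{A,B})}^\wedge$ and $\mathcal{GI}_{(\mathcal{Z,W})}^\vee$ are identified by hypothesis, the plan is to argue that the common $\mathcal{Z}$-injective dimension of this single class equals its $\mathcal{B}$-projective (co)resolution dimension, i.e.\ that measuring the one class from the ``injective'' side against $\mathcal{Z}$ gives the same finite number as measuring it from the ``projective'' side against $\mathcal{B}$. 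This is the relative incarnation of the Bennis--Mahdou phenomenon \eqref{eqn:BennisMahdou_global}, and I would prove it by showing both quantities equal $\id_\nu(\omega)=\pd_\omega(\nu)$, reducing the symmetric equality $\id_\nu(\omega)=\pd_\omega(\nu)$ to the orthogonality $\Ext^{>0}(\omega,\nu)$-type vanishing and the adjoint balance between $\omega$-resolutions and $\nu$-coresolutions encoded in the relative setting $(\mathsf{rs1})$–$(\mathsf{rs5})$. The remaining identifications—$\pd_{\mathcal{B}}(\mathcal{GI}_{(\mathcal{Z,W})})=\pd_{\mathcal{B}}(\mathcal{GI}_{(\mathcal{Z,W})}^\vee)=\pd_\omega(\mathcal{GI}_{(\mathcal{Z,W})}^\vee)=\pd_\omega(\mathcal{GI}_{(\mathcal{Z,W})})=\pd_\omega(\nu)$—are the exact projective duals of the injective-side collapses already described, using the closure of $\mcB$ under direct summands in place of that of $\mathcal{Z}$, and so follow formally by dualizing. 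Threading these blocks in the cyclic order displayed then closes the chain.
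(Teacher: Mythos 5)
Your overall architecture---anchoring on Propositions \ref{pro: Gid(GP)<=n} and \ref{pro: GidGP^<=n}, collapsing the Gorenstein dimensions into relative $\nu$- and $\omega$-dimensions via the corollaries of \cite{BMS}, and welding the two dual halves through $\id_\nu(\omega)=\pd_\omega(\nu)$---is essentially the paper's, but there is one genuine gap: you treat $\pi\mathcal{GP}_{(\omega,\mcB,1)}\subseteq\mathcal{I}^{<\infty}_{\mathcal{Z}}$ (and its dual $\pi\mathcal{GI}_{(\mathcal{Z},\nu,1)}\subseteq\mathcal{P}^{<\infty}_{\mathcal{B}}$) as a ``standing assumption in force throughout this section.'' It is not. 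The announcement in Section 2 is realized in Propositions \ref{pro: Gid(GP)<=n} and \ref{pro: GidGP^<=n} as an \emph{explicit hypothesis} of items (1) and (2), and Theorem \ref{thm: gpd leq gid} deliberately does not carry it: its hypotheses are ($\mathsf{rs1}$)--($\mathsf{rs5}$), closure of $\mcB$ and $\mathcal{Z}$ under direct summands, and the equality $\mathcal{GP}^{\wedge}_{(\mcA,\mcB)}=\mathcal{GI}^{\vee}_{(\mathcal{Z,W})}$. So before either proposition may be invoked, the containments must be \emph{derived}, and this is exactly where the hypothesis $\mathcal{GP}^{\wedge}_{(\mcA,\mcB)}=\mathcal{GI}^{\vee}_{(\mathcal{Z,W})}$ does work beyond identifying $\mathrm{FGID}_{(\mathcal{Z,W})}(\mcC)$: since $\mathcal{GP}_{(\mcA,\mcB)}\subseteq\mathcal{GP}^{\wedge}_{(\mcA,\mcB)}=\mathcal{GI}^{\vee}_{(\mathcal{Z,W})}$, and every object of $\mathcal{GI}^{\vee}_{(\mathcal{Z,W})}$ has finite $\mathcal{Z}$-injective dimension by the dual of \cite[Coroll. 4.15 (a)]{BMS} (ultimately because $\mathcal{GI}_{(\mathcal{Z,W})}\subseteq\mathcal{Z}^{\perp}$), one gets $\mathcal{GP}_{(\mcA,\mcB)}\subseteq\mathcal{I}^{<\infty}_{\mathcal{Z}}$, which by Remark \ref{rmk:equiv_assumptions} yields the condition your anchors need; dually one obtains $\mathcal{GI}_{(\mathcal{Z,W})}\subseteq\mathcal{P}^{<\infty}_{\mathcal{B}}$. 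Without this derivation your proof rests on a hypothesis the theorem does not grant.

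A second, less serious point: the step you single out as ``the main obstacle,'' namely $\id_\nu(\omega)=\pd_\omega(\nu)$, is actually free. With the Ext-theoretic definition of relative dimensions used in \cite{BMS}, both numbers equal $\sup\{\,n : \Ext^{n}(V,W)\neq 0 \text{ for some } V\in\nu,\ W\in\omega\,\}$, and this symmetry is precisely \cite[Lem. 2.6]{BMS}, which the paper cites; no balance or adjointness argument between $\omega$-resolutions and $\nu$-coresolutions is needed, nor do conditions ($\mathsf{rs1}$)--($\mathsf{rs5}$) obviously supply one. Once this is noted and the containments above are in place, the remainder of your plan---the two dual chains of equalities and their junction---does reproduce the paper's proof.
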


\begin{proof}
By \cite[Lem. 2.6, Corolls. 4.11 (a) \& 4.15 (a1)]{BMS} we have that 
\begin{align*}
\Gpd_{(\mathcal{A,B})}(\mathcal{GI}_{(\mathcal{Z,W})}) & = \pd_{\omega}(\mathcal{GI}_{(\mathcal{Z,W})}) = \pd_{\omega}(\mathcal{GI}_{(\mathcal{Z,W})}^\vee) = \pd_{\omega}(\mathcal{GP}_{(\mathcal{A,B})}^\wedge) \\
& = \rm{FGPD}_{(\mathcal{A,B})}(\mcC) = \pd_{\mathcal{B}}(\mathcal{GP}_{(\mathcal{A,B})}^\wedge) = \pd_{\mathcal{B}}(\mathcal{GI}_{(\mathcal{Z,W})}^\vee) \\
& = \pd_{\mathcal{B}}(\mathcal{GI}_{(\mathcal{Z,W})}),
\end{align*}
and
\[
\Gpd_{(\mathcal{A,B})}(\nu) = \pd_{\omega}(\nu).
\]
Dually, 
\begin{align*}
\Gid_{(\mathcal{Z,W})}(\mathcal{GP}_{(\mathcal{A,B})}) & = \id_{\nu}(\mathcal{GP}_{(\mathcal{A,B})}) = \id_{\nu}(\mathcal{GP}_{(\mathcal{A,B})}^\wedge) = \id_{\nu}(\mathcal{GI}_{(\mathcal{Z,W})}^\vee) \\
& = \rm{FGID}_{(\mathcal{Z,W})}(\mcC) = \id_{\mathcal{Z}}(\mathcal{GI}_{(\mathcal{Z,W})}^\vee) = \id_{\mathcal{Z}}(\mathcal{GP}_{(\mathcal{A,B})}^\wedge) \\
& = \id_{\mathcal{Z}}(\mathcal{GP}_{(\mathcal{A,B})}),
\end{align*}
and
\[
\Gid_{(\mathcal{Z,W})}(\omega) = \id_{\nu}(\omega).
\]
Note also that the equality $\mathcal{GP}_{(\mathcal{A,B})}^{\wedge} = \mathcal{GI}_{(\mathcal{Z,W})}^{\vee}$, along with \cite[Coroll. 4.15 (a)]{BMS} and its dual, imply the validity of the containments $\mathcal{GP}_{(\mcA,\mcB)} \subseteq \mathcal{I}^{< \infty}_{\mathcal{Z}}$ and $\mathcal{GI}_{(\mathcal{Z,W})} \subseteq \mathcal{P}^{< \infty}_{\mathcal{B}}$. Then by Propositions \ref{pro: Gid(GP)<=n} and \ref{pro: GidGP^<=n}, we have that 
\begin{align*}
\Gid_{(\mathcal{Z,W})}(\mathcal{GP}_{(\mcA,\mcB)}^{\wedge}) & = \Gid_{(\mathcal{Z, W})}(\mathcal{GP}_{(\mcA, \mcB)}) = \Gid_{(\mathcal{Z, W})}(\omega), \\ 
\Gpd_{(\mathcal{A,B})}(\mathcal{GI}_{(\mathcal{Z,W})}^{\vee}) & = \Gpd_{(\mathcal{A,B})}(\mathcal{GI}_{(\mathcal{Z,W})}) = \Gpd_{(\mathcal{A,B})}(\nu).
\end{align*}
Therefore, since $\pd_{\omega}(\nu) = \id_{\nu}(\omega)$, the result follows.
\end{proof}

Related to the finitistic relative Gorenstein dimensions, we also have the global $(\mcA,\mcB)$-Gorenstein projective and global $(\mathcal{Z,W})$-Gorenstein injective dimensions of $\mathcal{C}$, also defined in \cite[Def. 4.17]{BMS}, as
\begin{align*}
{\rm gl.GPD}_{(\mathcal{A,B})}(\mcC) & := {\rm Gpd}_{(\mcA,\mcB)}(\mathcal{C}), \\
{\rm gl.GID}_{(\mathcal{Z,W})}(\mcC) & := {\rm Gid}_{(\mathcal{Z,W})}(\mathcal{C}).
\end{align*}
One can note that 
\begin{align*}
{\rm FGPD}_{(\mathcal{A,B})}(\mcC) & \leq {\rm gl.GPD}_{(\mathcal{A,B})}(\mcC), \\
\rm{FGID}_{(\mathcal{Z,W})}(\mcC) & \leq {\rm gl.GID}_{(\mathcal{Z,W})}(\mcC).
\end{align*}
Note that $\mcC = \mathcal{GP}^\wedge_{(\mcA,\mcB)}$ implies that ${\rm FGPD}_{(\mathcal{A,B})}(\mcC) = {\rm gl.GPD}_{(\mathcal{A,B})}(\mcC)$, and a similar equality holds for the injective case if $\mathcal{C} = \mathcal{GI}^\vee_{(\mathcal{Z,W})}$. It follows that under the assumptions of Theorem \ref{thm: gpd leq gid}, one obtains the equality
\begin{align}
{\rm gl.GID}_{(\mathcal{Z,W})}(\mcC) & = \rm{FGID}_{(\mathcal{Z,W})}(\mcC) = {\rm FGPD}_{(\mathcal{A,B})}(\mcC) = {\rm gl.GPD}_{(\mathcal{A,B})}(\mcC). \label{eqn:BennisMahdou_relative}
\end{align}
In particular, Theorem \ref{thm: gpd leq gid} covers \cite[Coroll. 3.5]{Becerril} in the case where $\mcC$ has enough projective and injective objects, and $\omega$ and $\nu$ coincide with the class of projective and injective objects of $\mcC$, respectively. The assumption $\mcC = \mathcal{GP}^\wedge_{(\mcA,\mcB)} = \mathcal{GI}^\vee_{(\mathcal{Z,W})}$ is sufficient but not necessary. For instance, in the absolute case where $\mcA = \mcB =$ projective $R$-modules and $\mathcal{Z} = \mathcal{W} =$ injective $R$-modules, the equality \eqref{eqn:BennisMahdou_global} holds for any arbitrary ring $R$. In the relative setting, there are some known cases where \eqref{eqn:BennisMahdou_relative} holds (see \cite[Prop. 3.15]{Becerril}). The following extends the cited result.

\begin{corollary}\label{coro:relative_global}
Let $\mcC$ be an AB4 and AB4${}^\ast$ abelian category, and $(\mathcal{A,B})$ be GP-admissible pair and $(\mathcal{Z,W})$ be a GI-admissible pair satisfying conditions from ($\mathsf{rs1}$) to ($\mathsf{rs5}$), with $\mathcal{B}$ and $\mathcal{Z}$ closed under direct summands. If ${\rm gl.GPD}_{(\mathcal{A,B})}(\mcC) < \infty$ and $\omega \subseteq \mcGI^\vee_{(\mathcal{Z,W})}$, then 
\begin{align*}
\Gid_{(\mathcal{Z,W})}(\omega) & = \id_{\nu}(\omega) = \pd_{\omega}(\nu) = \Gpd_{(\mathcal{A,B})}(\nu).
\end{align*}
Moreover, if $\mathcal{GP}_{(\mathcal{A,B})} \subseteq \mathcal{I}^{<\infty}_{\mathcal{Z}}$ then the previous equality extends to
\begin{align*}
{\rm gl.GID}_{(\mathcal{Z,W})}(\mcC) & = \Gid_{(\mathcal{Z,W})}(\omega) = \pd_{\omega}(\nu) = \pd(\nu) = \pd(\mathcal{Z}) \\
& = \id(\mathcal{B}) = \id(\omega) = \id_{\nu}(\omega) = \Gpd_{(\mathcal{A,B})}(\nu) = {\rm gl.GPD}_{(\mathcal{A,B})}(\mcC).
\end{align*}
\end{corollary}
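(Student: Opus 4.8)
The plan is to obtain the first chain from the objectwise comparisons already used in the proof of Theorem~\ref{thm: gpd leq gid}, and to reduce the ``moreover'' part to a direct application of that theorem after upgrading the two standing hypotheses to the class equality $\mathcal{GP}^\wedge_{(\mcA,\mcB)} = \mathcal{GI}^\vee_{(\mathcal{Z,W})}$. Two elementary observations will be used throughout. First, since ${\rm gl.GPD}_{(\mcA,\mcB)}(\mcC) = \Gpd_{(\mcA,\mcB)}(\mcC)$ is a supremum of $(\mcA,\mcB)$-Gorenstein projective dimensions, its finiteness forces $\mcC = \mathcal{GP}^\wedge_{(\mcA,\mcB)}$, and in particular $\nu \subseteq \mathcal{GP}^\wedge_{(\mcA,\mcB)}$ together with $\Gpd_{(\mcA,\mcB)}(\nu) \leq {\rm gl.GPD}_{(\mcA,\mcB)}(\mcC) < \infty$. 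Second, the hypothesis $\omega \subseteq \mathcal{GI}^\vee_{(\mathcal{Z,W})}$ says precisely that each object of $\omega$ has finite $(\mathcal{Z,W})$-Gorenstein injective dimension.

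For the first displayed chain I would argue exactly as in the proof of Theorem~\ref{thm: gpd leq gid}, using the hypotheses only to supply the finiteness needed to apply \cite[Coroll. 4.11]{BMS} and its dual objectwise. Because $\omega \subseteq \mathcal{GI}^\vee_{(\mathcal{Z,W})}$, the dual of \cite[Coroll. 4.11]{BMS} gives $\Gid_{(\mathcal{Z,W})}(W) = \id_\nu(W)$ for every $W \in \omega$, and taking suprema yields $\Gid_{(\mathcal{Z,W})}(\omega) = \id_\nu(\omega)$. Symmetrically, since $\nu \subseteq \mathcal{GP}^\wedge_{(\mcA,\mcB)}$, \cite[Coroll. 4.11]{BMS} gives $\Gpd_{(\mcA,\mcB)}(\nu) = \pd_\omega(\nu)$. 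The chain then closes with the balance identity $\pd_\omega(\nu) = \id_\nu(\omega)$ invoked at the end of the proof of Theorem~\ref{thm: gpd leq gid}, and all four quantities are finite by the bound $\Gpd_{(\mcA,\mcB)}(\nu) \leq {\rm gl.GPD}_{(\mcA,\mcB)}(\mcC)$.

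For the ``moreover'' part I would first promote the hypotheses to the equality of classes. By Remark~\ref{rmk:equiv_assumptions} the extra assumption $\mathcal{GP}_{(\mcA,\mcB)} \subseteq \mathcal{I}^{<\infty}_{\mathcal{Z}}$ is equivalent to $\pi\mathcal{GP}_{(\omega,\mcB,1)} \subseteq \mathcal{I}^{<\infty}_{\mathcal{Z}}$, so Proposition~\ref{pro: GidGP^<=n} applies and gives $\Gid_{(\mathcal{Z,W})}(\mathcal{GP}^\wedge_{(\mcA,\mcB)}) = \Gid_{(\mathcal{Z,W})}(\omega)$. By the first part this common value equals $\Gpd_{(\mcA,\mcB)}(\nu)$, hence is finite; thus every object of $\mathcal{GP}^\wedge_{(\mcA,\mcB)} = \mcC$ has finite $(\mathcal{Z,W})$-Gorenstein injective dimension, i.e. $\mcC \subseteq \mathcal{GI}^\vee_{(\mathcal{Z,W})}$. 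The reverse inclusion being automatic, we obtain $\mathcal{GP}^\wedge_{(\mcA,\mcB)} = \mathcal{GI}^\vee_{(\mathcal{Z,W})} = \mcC$. Now Theorem~\ref{thm: gpd leq gid} applies and yields the whole string of equalities among the relative and finitistic relative Gorenstein dimensions; since $\mcC$ equals both thick closures, ${\rm FGPD}_{(\mcA,\mcB)}(\mcC) = {\rm gl.GPD}_{(\mcA,\mcB)}(\mcC)$ and ${\rm FGID}_{(\mathcal{Z,W})}(\mcC) = {\rm gl.GID}_{(\mathcal{Z,W})}(\mcC)$, so ${\rm gl.GID}_{(\mathcal{Z,W})}(\mcC) = {\rm gl.GPD}_{(\mcA,\mcB)}(\mcC)$ as well.

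The step I expect to be most delicate is the identification of the relative dimensions with the absolute ones $\pd(\nu) = \pd(\mathcal{Z}) = \id(\mcB) = \id(\omega)$, together with the matching equalities $\pd_\omega(\nu) = \pd(\nu)$ and $\id_\nu(\omega) = \id(\omega)$. The inclusions $\nu \subseteq \mathcal{Z}$ and $\omega \subseteq \mcB$ give the easy inequalities $\pd(\nu) \leq \pd(\mathcal{Z})$ and $\id(\omega) \leq \id(\mcB)$ at once; for the reverse inequalities, and for the symmetry $\pd(\mathcal{Z}) = \id(\mcB)$, I would follow the argument of \cite[Prop. 3.15]{Becerril} (the result this corollary extends), exploiting that the now-finite common global value, together with $\mathcal{GP}_{(\mcA,\mcB)} \subseteq \mathcal{I}^{<\infty}_{\mathcal{Z}}$, forces the bounded relative $\omega$- and $\nu$-(co)resolutions to be realizable by genuine projective and injective (co)resolutions. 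The relative content is thus entirely governed by Theorem~\ref{thm: gpd leq gid}; the only genuinely new work lies in this passage from relative to absolute homological dimensions under the finiteness hypotheses.
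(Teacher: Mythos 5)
Your argument follows the paper's own route almost exactly: the first chain comes from the objectwise relative-dimension identifications in \cite{BMS} (you cite Coroll.~4.11 and Lem.~2.6 where the paper cites Coroll.~4.15 and its dual, which amounts to the same computation), and the ``moreover'' part is reduced, as in the paper, to establishing $\mcC = \mathcal{GP}^\wedge_{(\mcA,\mcB)} = \mathcal{GI}^\vee_{(\mathcal{Z,W})}$ --- finiteness of ${\rm gl.GPD}_{(\mcA,\mcB)}(\mcC)$ gives the first equality, while Remark~\ref{rmk:equiv_assumptions}, Proposition~\ref{pro: GidGP^<=n} and the finiteness supplied by the first chain give the second --- after which Theorem~\ref{thm: gpd leq gid}, together with the identities ${\rm FGPD}_{(\mcA,\mcB)}(\mcC) = {\rm gl.GPD}_{(\mcA,\mcB)}(\mcC)$ and ${\rm FGID}_{(\mathcal{Z,W})}(\mcC) = {\rm gl.GID}_{(\mathcal{Z,W})}(\mcC)$, finishes the job. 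All of this is correct and is precisely the paper's proof.

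The gap is in your last paragraph. The equalities $\pd(\nu) = \pd(\mathcal{Z}) = \id(\mcB) = \id(\omega)$, and their matching with $\pd_\omega(\nu)$ and $\id_\nu(\omega)$, are part of the statement, and you leave them unproved: you only indicate that one should ``follow the argument of \cite[Prop.~3.15]{Becerril}'', by realizing the bounded relative $\omega$- and $\nu$-(co)resolutions as genuine projective and injective (co)resolutions. That mechanism is not available here: $\mcC$ is merely an AB4 and AB4${}^\ast$ abelian category, so it need not have enough projectives or injectives, and in this setting $\pd(-)$ and $\id(-)$ are the Ext-vanishing dimensions $\pd_{\mcC}(-)$ and $\id_{\mcC}(-)$, not dimensions measured by actual resolutions. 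In fact, no new work is needed at all: once $\mathcal{GP}^\wedge_{(\mcA,\mcB)} = \mathcal{GI}^\vee_{(\mathcal{Z,W})} = \mcC$ is known, the four absolute dimensions are already terms of the chain in Theorem~\ref{thm: gpd leq gid}, via the balance identity $\pd_{\mathcal{X}}(\mathcal{Y}) = \id_{\mathcal{Y}}(\mathcal{X})$ of \cite[Lem.~2.6]{BMS} applied with one of the two classes equal to $\mcC$; namely, $\id_{\nu}(\mathcal{GP}^\wedge_{(\mcA,\mcB)}) = \id_{\nu}(\mcC) = \pd_{\mcC}(\nu) = \pd(\nu)$, and likewise $\id_{\mathcal{Z}}(\mathcal{GP}^\wedge_{(\mcA,\mcB)}) = \pd(\mathcal{Z})$, $\pd_{\mathcal{B}}(\mathcal{GI}^\vee_{(\mathcal{Z,W})}) = \id(\mcB)$, and $\pd_{\omega}(\mathcal{GI}^\vee_{(\mathcal{Z,W})}) = \id(\omega)$. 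With this observation your proof closes; as written, however, the step you yourself flag as ``most delicate'' is missing, and the route you sketch for it would not go through in this generality.
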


\begin{proof}
The first equality follows by \cite[Coroll. 4.15]{BMS} and its dual. The last equality will be a consequence of Theorem \ref{thm: gpd leq gid} after showing that $\mathcal{C} = \mathcal{GI}_{(\mathcal{Z,W})}^{\vee}$ (note that we already have $\mathcal{GP}_{(\mathcal{A,B})}^{\wedge} = \mathcal{C}$). Since $\mathcal{C} \subseteq \mathcal{I}^{<\infty}_{\mathcal{Z}}$ by Remark \ref{rmk:equiv_assumptions}, we obtain from Proposition \ref{pro: GidGP^<=n} that $\Gid_{(\mathcal{Z, W})}(\mathcal{C}) = \Gid_{(\mathcal{Z,W})}(\omega) < \infty$. 
\end{proof}

Assumptions in Corollary \ref{coro:relative_global} are sufficient but not necessary conditions so that ${\rm gl.GID}_{(\mathcal{Z,W})}(\mcC) = {\rm gl.GPD}_{(\mathcal{A,B})}(\mcC)$. As we already mentioned, the previous equality always holds for the choice $\mcA = \mcB = \mathcal{P}(R)$ (projective $R$-modules) and $\mathcal{Z} = \mathcal{W} = \mathcal{I}(R)$ (injective $R$-modules), even in the case where the global Gorenstein dimension is infinite. Let us now analyze some known results and outcomes with respect to other very common GP-admissible and GI-admissible pairs in relative Gorenstein homological algebra, namely, 
\begin{align}
(\mcA,\mcB) & = (\mcP(R),\mcF(R)) & & \text{and} & (\mathcal{Z,W}) & = (\text{FP-}\mcI(R),\mcI(R)) \label{eqn:pairsDing}
\end{align}
and 
\begin{align}
(\mcA,\mcB) & = (\mcP(R),\mcL(R)) & & \text{and} & (\mathcal{Z,W}) & = (\text{FP}_\infty\text{-}\mcI(R),\mcI(R)). \label{eqn:pairsAC}
\end{align}

\begin{itemize}
\item Regarding the pairs \eqref{eqn:pairsDing}, we obtain the classes $\mathcal{DP}(R) := \mathcal{GP}_{(\mcP(R),\mcF(R))}$ and $\mathcal{DI}(R) := \mathcal{GI}_{(\text{FP-}\mcI(R),\mcI(R))}$ of Ding projective and Ding injective $R$-modules, respectively. Let us denote the corresponding global dimensions, called the (\emph{left}) \emph{global Ding projective} and the \emph{global Ding injective dimensions} of $R$, by ${\rm gl.DPD}(R)$ and ${\rm gl.DID}(R)$. It is not known whether or not the equality ${\rm gl.DPD}(R) = {\rm gl.DID}(R)$ holds for any ring $R$, although there are some cases in which one can give an affirmative answer, like for instance when $R$ is a Ding-Chen ring (see Yang's \cite[Thm. 3.11]{Yang}).

Concerning the relation of the global Ding dimensions with the global Gorenstein dimension, it was shown by  Mahdou and Tamekkante in \cite[Thm. 3.1]{MahdouTamekkante} that ${\rm gl.DPD}(R) = {\rm gl.GD}(R)$ for any ring $R$. It then follows that 
\begin{align}
{\rm gl.GD}(R) & = {\rm gl.DPD}(R) = {\rm gl.DID}(R). \label{eqn:globalGorensteinVsDing}
\end{align}
whenever $R$ is a Ding-Chen ring. The equality \eqref{eqn:globalGorensteinVsDing} was also proved in \cite[Prop. 3.16]{Becerril} for rings which satisfy that $\pd(\text{FP}\text{-}\mcI(R)) < \infty$ and $\id(\mathcal{P}(R)) < \infty$. Another family of rings for which \eqref{eqn:globalGorensteinVsDing} is also valid is formed by those rings $R$ such that both $\pd(\mathcal{F}(R))$ and $\id(\text{FP}\text{-}\mcI(R))$ are smaller than or equal to ${\rm gl.GD}(R)$. Indeed, one can mimic the proof of Liang and Wang's \cite[Lems. 13 \& 15]{LiangWang} in order to show that
\begin{align*}
{\rm gl.DPD}(R) & = {\rm max}\{ {\rm gl.GD}(R), \pd(\mathcal{F}(R)) \}, \\
{\rm gl.DID}(R) & = {\rm max}\{ {\rm gl.GD}(R), \id(\text{FP}\text{-}\mcI(R)) \}.
\end{align*}
Thus, $\pd(\mathcal{F}(R)), \id(\text{FP}\text{-}\mcI(R) \leq {\rm gl.GD}(R)$ imply \eqref{eqn:globalGorensteinVsDing}. 

We can deduce one more case in which \eqref{eqn:globalGorensteinVsDing} holds. Let $R$ be a ring such that ${\rm gl.GD}(R)$ is finite and over which every Ding projective $R$-module has finite injective dimension relative to $\text{FP}\text{-}\mcI(R)$. In this case, ${\rm gl.DPD}(R) = {\rm gl.GD}(R)$ is finite. Moreover, using that the finiteness of ${\rm gl.GD}(R)$ implies that of $\id(\mcP(R))$ (see \cite[Lem. 2.1]{BMglobal}) note that $\mathcal{P}(R) \subseteq \mathcal{DI}(R)^\vee$. Hence, from Corollary \ref{coro:relative_global} we have that ${\rm gl.DID}(R)$ is also finite and
\[
{\rm gl.DPD}(R) = {\rm gl.GD}(R) = {\rm gl.DID}(R) = \pd(\mcI(R)) = \id(\mcP(R)). 
\]

\item For the GP-admissible and GI-admissible pairs in \eqref{eqn:pairsAC}, the corresponding relative Gorenstein projective and Gorenstein injective modules, introduced in \cite[\S \ 5 \& 8]{BGH}, are known as Gorenstein AC-projective and Gorenstein AC-injective modules. The classes of these modules will be denoted by $\mcGP_{\rm AC}(R)$ and $\mcGI_{\rm AC}(R)$, and their corresponding global Gorenstein AC-projective and Gorenstein AC-injective dimensions by: 
\begin{align*}
\rm{gl.GPD}_{\rm{AC}}(R) & := \Gpd_{(\mcP(R),\mathcal{L}(R))}(\Mod(R)), \\
\rm{gl.GID}_{\rm{AC}}(R) & := \Gid_{(\text{FP}_\infty\text{-}\mcI(R),\mcI(R))}(\Mod(R)).
\end{align*}
By Corollary \ref{coro:relative_global}, if $R$ is a ring with $\rm{gl.GPD}_{\rm{AC}}(R) < \infty$ and such that every Gorenstein AC-projective module has finite injective dimension relative to $\text{FP}_\infty\text{-}\mcI(R)$, then $\rm{gl.GID}_{\rm{AC}}(R)$ is also finite and
\[
\rm{gl.GID}_{\rm{AC}}(R) = \rm{gl.GPD}_{\rm{AC}}(R).
\]
On the other hand, by \cite[Lems. 13 \& 15]{LiangWang} we know
\begin{align*}
{\rm gl.GPD}_{\rm AC}(R) & = {\rm max}\{ {\rm gl.GD}(R), \pd(\mathcal{L}(R)) \}, \\
{\rm gl.GID}_{\rm AC}(R) & = {\rm max}\{ {\rm gl.GD}(R), \id(\text{FP}_\infty\text{-}\mcI(R)) \}.
\end{align*}
It then follows that $\pd(\mathcal{L}(R))$ and $\id(\text{FP}_\infty\text{-}\mcI(R))$ are both finite, and so $\mcGP(R) = \mathcal{DP}(R) = \mcGP_{\rm AC}(R)$ and $\mcGI(R) = \mathcal{DI}(R) = \mcGI_{\rm AC}(R)$. Hence,
\begin{align}
{\rm gl.GD}(R) & = \rm{gl.DPD}(R) = {\rm gl.GPD}_{\rm{AC}}(R) = {\rm gl.GID}_{\rm{AC}}(R) = \rm{gl.DID}(R). \label{eqn:GorDingACwholeeq}
\end{align}
\end{itemize}

\begin{example}
There are cases in which \eqref{eqn:GorDingACwholeeq} holds and the common value is $\infty$. Let $\mathbb{K}$ be a field, and consider the $\mathbb{K}$-algebra of polynomials with coefficients in $\mathbb{K}$ and infinitely many variables $\mathbb{K}[x_1, x_2, \dots]$ along with the ideal $\mathfrak{m} = (x_1, x_2, \dots)$ (infinitely) generated by these variables. Let $R$ be the quotient commutative $\mathbb{K}$-algebra
\[
R = \mathbb{K}[x_1, x_2, \dots] / \mathfrak{m}^2.
\]
Note that $R$ can also be obtained as the path algebra $\mathbb{K}Q / I$ where $Q$ is the quiver
\[
\begin{tikzpicture}
  \node {$\cdots$ $\bullet_{1}$} edge [in=30,out=45,loop] node[above] {\scriptsize$x_1$} () edge [in=60,out=90,loop] node[above] {\scriptsize$x_2$} ();
\end{tikzpicture}
\]
with one vertex $1$, infinitely many loops $x_i$, and $I$ is the ideal generated by the terms $x_i x_j$ with $i$ and $j$ running over the positive integers. 

From \cite[Prop. 2.5]{BGH} we know that over this ring, the class of finitely generated and free $R$-modules coincides with $\mathcal{FP}_\infty(R)$. It then follows that $\text{FP}_\infty\text{-}\mcI(R) = \mathcal{L}(R) = \Mod(R)$. Thus, from \cite[Lems. 13 \& 15]{LiangWang} we obtain
\begin{align*}
{\rm gl.GPD}_{\rm AC}(R) & = {\rm max}\{ {\rm gl.GD}(R), \pd(\Mod(R)) \} = \pd(\Mod(R)) = {\rm gl.dim}(R), \\
{\rm gl.GID}_{\rm AC}(R) & = {\rm max}\{ {\rm gl.GD}(R), \id(\Mod(R)) \} = \id(\Mod(R)) = {\rm gl.dim}(R),
\end{align*}
where ${\rm gl.dim}(R)$ denotes the global dimension of $R$. On the other hand, the projective $R$-module $P(1)$ at $1$ has infinite injective dimension. Similarly, the injective $R$-module $I(1)$ at $1$ has infinite projective dimension. It then follows that 
\[
{\rm gl.GPD}_{\rm AC}(R) = {\rm gl.GID}_{\rm AC}(R) = {\rm gl.dim}(R) = \infty
\] 
and that $\pd(\mathcal{I}(R)) = \infty$ and $\id(\mcP(R)) = \infty$. In particular, $R$ is not a Gorenstein algebra (see \cite[Thm. 2.2 \& Def. 2.5]{BR}), and so ${\rm gl.GD}(R) =\infty$. This in turn implies that 
\[
\rm{gl.DPD}(R) = \rm{gl.DID}(R) = \infty.
\] 
Hence, for this $R$ the equality \eqref{eqn:GorDingACwholeeq} holds without the assumption ${\rm gl.GPD}_{\rm AC}(R) < \infty$. Moreover, over $R$ we can also note that $\mathcal{GP}_{\rm AC}(R)= \mathcal{P}(R)$ and that $\mathcal{GI}_{\rm AC}(R)= \mathcal{I}(R)$. Indeed, if $G$ is a Gorenstein AC-projective $R$-module, it is a cycle of a $\Hom(-,\Mod(R))$-acyclic exact complex $P_\bullet$ of projective $R$-modules. It follows that each exact sequence $Z_m(P_\bullet) \rightarrowtail P_m \twoheadrightarrow Z_{m-1}(P_\bullet)$ is split exact, and then $G$ is projective. So the other assumption in Corollary \ref{coro:relative_global}, namely $\mathcal{GP}_{\rm AC}(R) \subseteq \mathcal{I}_{\text{FP}_\infty\text{-}\mcI(R)}^{< \infty}$ becomes $\mathcal{P}(R) \subseteq \mathcal{I}^{< \infty}$, which does not hold for this ring $R$. Therefore, the hypotheses in Corollary \ref{coro:relative_global} are sufficient but not necessary conditions to obtain the equality ${\rm gl.GPD}_{(\mathcal{A,B})}(\mcC) = {\rm gl.GID}_{(\mathcal{Z,W})}(\mcC)$. 
\end{example}


\section*{Funding}

The authors thank Project PAPIIT-Universidad Nacional Aut\'onoma de M\'exico IN100124. The first author is supported by a postdoctoral fellowship from Direcci\'on General de Asuntos del Personal Acad\'emico DGAPA (Universidad Nacional Aut\'onoma de M\'exico). The third author is supported by the following institutions: Agencia Nacional de Investigaci\'on e Innovaci\'on (ANII) and Programa de Desarrollo de las Ciencias B\'asicas (PEDECIBA). 


\bibliographystyle{alpha}
\bibliography{biblionSG}

\end{document}